\newtheorem{theorem}{Theorem}[section]
\theoremstyle{definition}
\newtheorem{lemma}[theorem]{Lemma}
\newtheorem{definition}[theorem]{Definition}
\newtheorem{corollary}[theorem]{Corollary}
\begin{document}
\title{Gowers' Ramsey theorem for generalized tetris operations}
\author{Martino Lupini}
\address{Mathematics Department\\
California Institute of Technology\\
1200 E. California Blvd\\
MC 253-37\\
Pasadena, CA 91125}
\email{lupini@caltech.edu}
\urladdr{http://www.lupini.org/}
\date{\today }

\begin{abstract}
We prove a generalization of Gowers' theorem for $\mathrm{FIN}_{k}$ where,
instead of the single tetris operation \ $T:\mathrm{FIN}_{k}\rightarrow 
\mathrm{FIN}_{k-1}$, one considers all maps from $\mathrm{FIN}_{k}$ to $%
\mathrm{FIN}_{j}$ for $0\leq j\leq k$ arising from nondecreasing surjections 
$f:\left\{ 0,1,\ldots ,k+1\right\} \rightarrow \left\{ 0,1,\ldots
,j+1\right\} $. This answers a question of Barto\v{s}ov\'{a} and
Kwiatkowska. We also prove a common generalization of such a result and the
Galvin--Glazer--Hindman theorem on finite products, in the setting of
layered partial semigroups introduced by Farah, Hindman, and McLeod.
\end{abstract}

\subjclass[2000]{Primary 05D10; Secondary 54D80}
\keywords{Gowers Ramsey Theorem, Hindman theorem, Milliken-Taylor theorem, idempotent ultrafilter, Stone-\v{C}ech compactification, partial semigroup}

\maketitle

\section{Introduction\label{Section:introduction}}

Gower's theorem on $\mathrm{FIN}_{k}$ is a generalization of Hindman's
theorem on finite unions where one considers, rather than finite nonempty
subsets of $\omega $, the space $\mathrm{FIN}_{k}$ of all finitely supported
functions from $\omega $ to $\left\{ 0,1,\ldots ,k\right\} $ with maximum
value $k$. Such a space is endowed with a natural operation of pointwise
sum, which is defined for pairs of functions with disjoint support. Gowers
considered also the \emph{tetris operation}\textrm{\ }$T:\mathrm{FIN}%
_{k}\rightarrow \mathrm{FIN}_{k-1}$ defined by letting $\left( Tb\right)
(n)=\max \left\{ b(n)-1,0\right\} $ for $b\in \mathrm{FIN}_{k}$. Gowers'
theorem can be stated, shortly, by saying that for any finite coloring of $%
\mathrm{FIN}_{k}$ there exists an infinite sequence $\left( b_{n}\right) $
which is a\emph{\ block sequence}---in the sense that every element of the
support of $b_{n}$ precedes every element of the support of $b_{n+1}$---with
the property that the intersection of $\mathrm{FIN}_{k}$ with the smallest
subset of $\mathrm{FIN}_{1}\cup \cdots \cup \mathrm{FIN}_{k}$ that contains
the $b_{n}$'s and it is closed under pointwise sum of disjointly supported
functions and under the tetris operation, is monochromatic \cite%
{gowers_lipschitz_1992}. Gowers then used such a result---or more precisely
its symmetrized version where one considers functions from $\omega $ to $%
\left\{ -k,\ldots ,k\right\} $---to prove an \emph{oscillation stability }%
result for the sphere of the Banach space $c_{0}$. Other proof of Gowers'
theorem can be found in \cite%
{gowers_ramsey_2003,kanellopoulos_proof_2004,todorcevic_introduction_2010}.

Gowers' theorem of $\mathrm{FIN}_{k}$ as stated above implies through a
standard compactness argument its corresponding \emph{finitary version}.
Explicit combinatorial proofs of such a finitary version have been recently
given, independently, by Tyros \cite{tyros_primitive_2015} and
Ojeda-Aristizabal \cite{ojeda-aristizabal_finite_2015}. Particularly, the
argument from \cite{tyros_primitive_2015} yields a primitive recursive bound
on the associated \emph{Gowers numbers}.

A broad generalization of Gowers' theorem has been proved by Farah, Hindman,
and McLeod in \cite[Theorem 3.13]{farah_partition_2002} in the framework,
developed therein, of \emph{layered partial semigroups }and \emph{layered
actions}. Such a result provides, in particular, a common generalization of
Gowers' theorem and the Hales--Jewett theorem; see \cite[Theorem 3.15]%
{farah_partition_2002}. As general as \cite[Theorem 3.13]%
{farah_partition_2002} is, it nonetheless does not cover the case where one
considers $\mathrm{FIN}_{k}$ endowed with the multiple tetris operations
described below, since these do not form a layered action in the sense of 
\cite[Definition 3.3]{farah_partition_2002}.

In \cite{bartosova_gowers_2014}, Barto\v{s}ov\'{a} and Kwiatkowska
considered a generalization of Gowers' theorem, where \emph{multiple tetris
operations} are allowed. Precisely, they defined for $1\leq i\leq k$ the
tetris operation $T_{i}:\mathrm{FIN}_{k}\rightarrow \mathrm{FIN}_{k-1}$ by%
\begin{equation*}
T_{i}\left( b\right) :n\mapsto \left\{ 
\begin{array}{ll}
b(n)-1 & \text{if }b(n)\geq i\text{, and} \\ 
b(n) & \text{otherwise.}%
\end{array}%
\right. 
\end{equation*}%
Adapting methods from \cite{tyros_primitive_2015}, Barto\v{s}ov\'{a} and
Kwiatkowska proved in \cite{bartosova_gowers_2014} the strengthening of the 
\emph{finitary version }of Gowers' theorem where multiple tetris operations
are considered. The authors then provided in \cite{bartosova_gowers_2014}
applications of such a result to the dynamics of the Lelek fan.

Question 8.3 of \cite{bartosova_gowers_2014} asks whether the \emph{%
infinitary version }of Gowers' theorem on $\mathrm{FIN}_{k}$ holds when one
considers multiple tetris operations. In this paper, we show that this is
the case, via an adaptation of Gowers' original argument using idempotent
ultrafilters. In order to precisely state our result, we introduce some
terminology, to be used in the rest of the paper.

We denote by $\omega $ the set of nonnegative integers, and by $\mathbb{N}$
the set of nonzero elements of $\omega $. We identify an element $k$ of $%
\omega $ with the set $\left\{ 0,1,\ldots ,k-1\right\} $ of its
predecessors. As mentioned above, $\mathrm{FIN}_{k}$ denotes the set of
functions from $\omega $ to $k+1$ with maximum value $k$ and that vanish for
all but finitely many elements of $\omega $. We also let $\mathrm{FIN}_{\leq
k}$ be the union of $\mathrm{FIN}_{j}$ for $j=1,2,\ldots ,k$. The \emph{%
support }$\mathrm{Supp}\left( b\right) $ of an element $b$ of $\mathrm{FIN}%
_{k}$ is the set of elements of $\omega $ where $b$ does \emph{not }vanish.
For finite nonempty subsets $F,F^{\prime }$ of $\omega $, we write $%
F<F^{\prime }$ if the maximum element of $F\ $is smaller than the minimum
element of $F^{\prime }$.

Suppose that $0\leq j\leq k$ and $f:k+1\rightarrow j+1$ is a nondecreasing
surjection. We also denote by $f$ the \emph{generalized tetris operation }$f:%
\mathrm{FIN}_{k}\rightarrow \mathrm{FIN}_{j}$ defined by $f\left( b\right)
=f\circ b$. It is clear that the class of generalized tetris operations is
precisely the set of mappings that can be obtained as composition of the
multiple tetris operations $T_{i}$ for $i=1,2,\ldots ,k$ \footnote{%
We thank S\l awomir Solecki for pointing this out.}.

We say that $\left( b_{n}\right) $ is a \emph{block sequence }in $\mathrm{FIN%
}_{k}$ if $b_{n}\in \mathrm{FIN}_{k}$ and $\mathrm{Supp}\left( b_{n}\right) <%
\mathrm{Supp}\left( b_{n+1}\right) $ for every $n\in \omega $. If $j\in 
\mathbb{N}$, then we define the \emph{tetris subspace }$\mathrm{TS}%
_{j}\left( b_{n}\right) $ of $\mathrm{FIN}_{j}$ generated by $\left(
b_{n}\right) $ to be the set of elements of $\mathrm{FIN}_{j}$ of the form%
\begin{equation*}
f_{0}\circ b_{0}+\cdots +f_{n}\circ b_{n}
\end{equation*}%
for some $n\in \omega $, $j_{0},\ldots ,j_{n}\in j+1$ such that $\max
\left\{ j_{0},\ldots ,j_{n}\right\} =j$, and nondecreasing surjections $%
f_{i}:k+1\rightarrow j_{i}+1$ for $i\in n$. A block sequence $\left(
b_{n}^{\prime }\right) $ in $\mathrm{FIN}_{k}$ is a \emph{block subsequence}
of $\left( b_{n}\right) $ if $\left( b_{n}^{\prime }\right) $ is contained
in $\mathrm{\mathrm{TS}}_{k}\left( b_{n}\right) $.

In the following we will use some standard terminology concerning colorings.
An $r$-\emph{coloring} (or coloring with $r$ colors) of a set $X$ is a
function $c:X\rightarrow r$, and a \emph{finite coloring} is an $r$-coloring
for some $r\in \omega $. A subset $A$ of $X$ is monochromatic (for the given
coloring $c$) if $c$ is constant on $A$. Using this terminology, we can
state our infinitary Gowers' theorem for generalized tetris operations as
follows.

\begin{theorem}
\label{Theorem:multiGowers}Suppose that $k\in \mathbb{N}$. For any finite
coloring of $\mathrm{FIN}_{\leq k}$, there exists an infinite block sequence 
$\left( b_{n}\right) $ in $\mathrm{FIN}_{k}$ such that $\mathrm{TS}%
_{j}\left( b_{n}\right) $ is monochromatic for every $j=1,2,\ldots ,k$.
\end{theorem}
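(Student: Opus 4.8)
The plan is to follow Gowers' original ultrafilter argument, suitably adapted to accommodate all generalized tetris operations simultaneously. The key object will be the Stone--\v{C}ech compactification $\beta S$ of the partial semigroup $S = \mathrm{FIN}_{\leq k}$, or more precisely the closed subspace $\gamma S$ of ultrafilters concentrating on ``cofinal'' sets (the sets $\{b : \min\mathrm{Supp}(b) > m\}$), which forms a compact right-topological semigroup under the operation induced by pointwise sum of disjointly supported functions. The crucial new ingredient, replacing the single tetris map, is that each nondecreasing surjection $f : k+1 \to j+1$ induces a continuous map $\tilde f : \gamma\mathrm{FIN}_k \to \gamma\mathrm{FIN}_j$, and these are compatible with the semigroup operations. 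I would first establish the relevant algebraic and topological facts: that $\gamma\mathrm{FIN}_j$ is a compact right-topological semigroup for each $j$, that the maps $\tilde f$ are semigroup morphisms, and how they interact under composition of the corresponding surjections.

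Next I would locate a suitable idempotent. One takes a minimal idempotent $p \in \gamma\mathrm{FIN}_k$ belonging to the smallest ideal, and then considers its images $\tilde f(p) \in \gamma\mathrm{FIN}_j$ under the various surjections $f$. The heart of the matter is to arrange that these images are ``coherent'' in the appropriate sense: one wants, for each $j$, a single idempotent $p_j \in \gamma\mathrm{FIN}_j$ such that $p_k = p$, such that $\tilde f(p_k) = p_j$ for every nondecreasing surjection $f : k+1 \to j+1$, and such that the $p_j$ form a chain under the semigroup ordering (so that $p_j + p_k = p_k + p_j = p_k$, etc.). Building such a coherent family is where the minimality of $p$ is used: passing to the minimal ideal guarantees that the equations $\tilde f(p) = \tilde g(p)$ can be forced for surjections $f,g$ with the same target, because the relevant subsemigroups of idempotents are nonempty and one selects within the smallest ideal using the Ellis--Numakura lemma together with the fact that continuous morphisms send minimal idempotents to minimal idempotents. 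I expect this coherence step to be the main obstacle, since it is exactly the point at which the multiple tetris operations go beyond the layered-action framework of Farah--Hindman--McLeod.

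Once the coherent family $(p_j)_{1 \le j \le k}$ is in hand, the proof of the theorem itself is a routine recursive construction. Given a finite coloring $c$ of $\mathrm{FIN}_{\leq k}$, for each $j$ the color class containing the (unique, by coherence) $c$-value of $\mathrm{FIN}_j$ ``in the direction of $p_j$'' belongs to $p_j$; call these sets $A_j$. One then builds the block sequence $(b_n)$ by the standard idempotent recursion: having chosen $b_0, \dots, b_{n-1}$, one uses the defining property of an idempotent ultrafilter (that $A \in p$ implies $\{b : A - b \in p\} \in p$, in the partial-semigroup sense) together with the morphism property of the $\tilde f$ to find $b_n$ with support above all previous supports such that every element of $\mathrm{TS}_j(b_0, \dots, b_n)$ lies in $A_j$ for all $j$; here one must intersect finitely many sets, one for each of the finitely many tetris-combinations of $b_0, \dots, b_{n-1}$ and each surjection, all of which lie in the appropriate $p_j$. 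The coherence of the family ensures the recursion can be carried through uniformly in $j$, and the resulting sequence satisfies $\mathrm{TS}_j(b_n) \subseteq A_j$, hence $\mathrm{TS}_j(b_n)$ is monochromatic for every $j = 1, 2, \dots, k$, as desired.
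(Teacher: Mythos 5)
Your outline correctly identifies where the difficulty lies, but the mechanism you propose for resolving it does not work, and this is a genuine gap rather than a detail. You want a family $(p_j)_{1\le j\le k}$ of idempotents with $\tilde f(p_k)=p_j$ for \emph{every} nondecreasing surjection $f:k+1\to j+1$, and you claim this coherence can be ``forced'' by taking $p$ to be a minimal idempotent, invoking that continuous morphisms send minimal idempotents to minimal idempotents. That fact only tells you that $\tilde f(p)$ and $\tilde g(p)$ are both minimal idempotents of $\gamma\mathrm{FIN}_j$; it gives no reason for them to be \emph{equal}. The smallest ideal of $\gamma\mathrm{FIN}_j$ contains a great many distinct minimal idempotents, so for two different surjections $f\neq g$ with the same target the equation $\tilde f(p)=\tilde g(p)$ is simply not a consequence of minimality. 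Nothing in your argument produces a single ultrafilter on which all the generalized tetris operations with a common target agree, and without that the recursion in your last paragraph cannot even get started (you would have a different $A_j$ for each surjection, with no common refinement guaranteed to lie in any one ultrafilter).

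The paper closes exactly this gap with a concrete device you would need some version of: the sets $M_j=\{b\in\mathrm{FIN}_j:\forall n,\ b(n)\in\{0,j\}\}$. On $M_j$ every nondecreasing surjection $j+1\to i+1$ restricts to the \emph{same} map (everything is determined by $0\mapsto 0$, $j\mapsto i$), so an ultrafilter concentrating on $M_j$ is automatically coherent. This shows that the set $\Sigma$ of coherent sequences $(q_j)_j$ is a nonempty closed subsemigroup of the product $\prod_j\gamma\mathrm{FIN}_j$, and Ellis--Numakura applied \emph{inside} $\Sigma$ (not inside the smallest ideal of $\gamma\mathrm{FIN}_k$) produces the coherent idempotent family; a further recursion on $k$ inside nested subsemigroups $\Sigma_k$ then secures the chain relations $\mathcal{U}_j+\mathcal{U}_k=\mathcal{U}_k+\mathcal{U}_j=\mathcal{U}_k$. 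Note that the paper's ultrafilters are not claimed to be minimal at all; minimality is a red herring here. Your final recursive construction of the block sequence is the standard one and is fine once the coherent family exists, but as written the existence of that family is asserted rather than proved.
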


Theorem \ref{Theorem:multidimGowers} implies via a standard compactness
argument its corresponding finitary version. If $k,n\in \omega $, then we
denote by $\mathrm{FIN}_{k}(n)$ the set of functions $f:n\rightarrow k+1$
with maximum value $k$, and by $\mathrm{FIN}_{\leq k}(n)$ the union of $%
\mathrm{FIN}_{j}(n)$ for $j=1,2,\ldots ,k$. The notion of block sequence $%
\left( b_{0},\ldots ,b_{m-1}\right) $ and tetris subspace $\mathrm{TS}%
_{j}\left( b_{0},\ldots ,b_{m-1}\right) $ of $\mathrm{TS}_{j}(n)$ generated
by $\left( b_{0},\ldots ,b_{m-1}\right) $ are defined similarly as their
infinite counterparts.

\begin{corollary}
\label{Corollary:multiGowers}Given $k,r,\ell \in \mathbb{N}$ there exists $%
n\in \mathbb{N}$ such that for any $r$-coloring of $\mathrm{FIN}_{\leq k}(n)$%
, there exists a block sequence $\left( b_{0},b_{1},\ldots ,b_{\ell
-1}\right) $ in $\mathrm{FIN}_{k}(n)$ of length $\ell $ such that $\mathrm{TS%
}_{j}\left( b_{0},\ldots ,b_{\ell -1}\right) $ is monochromatic for any $%
j=1,2,\ldots ,k$.
\end{corollary}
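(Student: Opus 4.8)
The plan is to derive Corollary~\ref{Corollary:multiGowers} from Theorem~\ref{Theorem:multiGowers} by the standard compactness argument; I describe the steps and where care is needed.

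First I would fix the dictionary between the finitary and infinitary settings. For $n\in\omega$, identify $\mathrm{FIN}_{\leq k}(n)$ with the set of $b\in\mathrm{FIN}_{\leq k}$ such that $\mathrm{Supp}(b)\subseteq\{0,1,\dots,n-1\}$, using restriction in one direction and extension by zero in the other. Under this identification, if $(b_0,\dots,b_{m-1})$ is a block sequence in $\mathrm{FIN}_k$ with $\mathrm{Supp}(b_{m-1})\subseteq n$, then it is also a block sequence in $\mathrm{FIN}_k(n)$, and for each $j$ the tetris subspace $\mathrm{TS}_j(b_0,\dots,b_{m-1})$ formed in $\mathrm{FIN}_j(n)$ is exactly the image, under restriction to $n$, of the tetris subspace $\mathrm{TS}_j(b_0,\dots,b_{m-1})$ formed in $\mathrm{FIN}_j$. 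In particular the latter two sets are in bijection and are monochromatic for a given coloring simultaneously. This verification is a routine unwinding of the definitions of block sequence and tetris subspace, and it is the only point that really needs to be checked.

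Next, suppose toward a contradiction that the conclusion fails for some $k,r,\ell\in\mathbb{N}$. Then for every $n\in\mathbb{N}$ there is an $r$-coloring $c_n$ of $\mathrm{FIN}_{\leq k}(n)$ admitting no block sequence of length $\ell$ in $\mathrm{FIN}_k(n)$ all of whose tetris subspaces $\mathrm{TS}_j$ (for $j=1,\dots,k$) are monochromatic. Using the identification above, extend each $c_n$ (arbitrarily on those $b$ whose support is not contained in $n$) to an $r$-coloring $\tilde c_n$ of the countable set $\mathrm{FIN}_{\leq k}$. The space of all $r$-colorings of $\mathrm{FIN}_{\leq k}$ is compact and metrizable in the product topology, so there is a subsequence $(\tilde c_{n_i})_{i\in\omega}$ converging pointwise to some $r$-coloring $c$ of $\mathrm{FIN}_{\leq k}$.

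Finally, apply Theorem~\ref{Theorem:multiGowers} to $c$: this produces an infinite block sequence $(b_m)$ in $\mathrm{FIN}_k$ with $\mathrm{TS}_j(b_m)$ monochromatic for $c$ for every $j=1,\dots,k$. Its initial segment $(b_0,\dots,b_{\ell-1})$ is a block sequence of length $\ell$, and each $\mathrm{TS}_j(b_0,\dots,b_{\ell-1})\subseteq\mathrm{TS}_j(b_m)$ is monochromatic for $c$. Let $F$ be the union over $j=1,\dots,k$ of these finitely many tetris subspaces, a finite subset of $\mathrm{FIN}_{\leq k}$, and choose $N$ with $\mathrm{Supp}(b_{\ell-1})\subseteq N$, so that every member of $F$ has support contained in $N$. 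Since $\tilde c_{n_i}\to c$ pointwise and $F$ is finite, there is $i$ with $n_i\geq N$ and $\tilde c_{n_i}$ agreeing with $c$ on $F$; and on elements whose support is contained in $N\subseteq n_i$ the coloring $\tilde c_{n_i}$ agrees with $c_{n_i}$. Hence $(b_0,\dots,b_{\ell-1})$, read inside $\mathrm{FIN}_k(n_i)$, is a block sequence of length $\ell$ all of whose tetris subspaces $\mathrm{TS}_j$ are monochromatic for $c_{n_i}$, contradicting the choice of $c_{n_i}$. Since there is no genuine obstacle here (the infinitary theorem does all the work), the main thing to get right is the compatibility of the two notions of tetris subspace noted in the second paragraph.
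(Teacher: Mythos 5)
Your proof is correct and is exactly the ``standard compactness argument'' the paper invokes without detail: negate the finitary statement, extend and take a pointwise limit of the bad colorings, apply Theorem~\ref{Theorem:multiGowers} to the limit coloring, and transfer a length-$\ell$ initial segment back to some $\mathrm{FIN}_{\leq k}(n_i)$ using the finiteness of the relevant tetris subspaces. The compatibility check between the finitary and infinitary notions of block sequence and tetris subspace that you flag is indeed the only point requiring care, and it holds as you describe.
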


We will also prove below a more general statement than Theorem \ref%
{Theorem:multiGowers}, where one considers colorings of the space $\mathrm{%
FIN}_{k}^{\left[ m\right] }$ of block sequences of $\mathrm{FIN}_{k}$ of a
fixed length $m$. We will also provide a common generalization of such a
result and the Galvin--Glazer--Hindman theorem on finite products, in the
setting of layered partial semigroups introduced by Farah, Hindman, and
McLeod in \cite{farah_partition_2002}.

As mentioned above, the original Gowers theorem from \cite%
{gowers_lipschitz_1992} was used to prove the following \emph{%
oscillation-stability }result for the positive part of the sphere of $c_{0}$.
Recall that $c_{0}$ denotes the real Banach space of vanishing sequences of
real numbers endowed with the supremum norm. Let $\mathrm{PS}(c_{0})$ be the
positive part of the sphere of $c_{0}$, which is the set of elements of $%
c_{0}$ of norm $1$ with nonnegative coordinates. The support $\mathrm{Supp}%
\left( f\right) $ of an element $f$ of $c_{0}$ is the set $n\in \omega $
such that $f(n)\neq 0$. A \emph{normalized positive block basis }is a
sequence $\left( f_{n}\right) $ of finitely-supported elements of $\mathrm{PS%
}(c_{0})$ such that $\mathrm{Supp}\left( f_{n}\right) <\mathrm{Supp}\left(
f_{n+1}\right) $ for every $n\in \omega $. Gower's oscillation-stability
result asserts that for any Lipschitz map $F:\mathrm{PS}(c_{0})\rightarrow 
\mathbb{R}$ and $\varepsilon >0$ there exists a block basis $\left(
f_{n}\right) $ such that the oscillation of $F$ on the positive part of the
sphere of the subspace of $c_{0}$ spanned by $\left( f_{n}\right) $ is at
most $\varepsilon $ \cite[Theorem 6]{gowers_lipschitz_1992}. Such a result
is proved by considering a suitable discretization of $\mathrm{PS}(c_{0})$
that can naturally be identified with $\mathrm{FIN}_{k}$; see the proof of 
\cite[Theorem 6]{gowers_lipschitz_1992} and also \cite[Corollary 2.26]%
{todorcevic_introduction_2010}. Under such an identification, the tetris
operation on $\mathrm{FIN}_{k}$ corresponds to multiplication by positive
scalars in $c_{0}$.

Similarly, one can observe that the multiple tetris operations $T_{i}$ for $%
i=1,2,\ldots ,k$ described above correspond to the following nonlinear
operators on $c_{0}$. Fix $\lambda ,t\in \left[ 0,1\right] $ and consider
the operator $S_{t,\lambda }$ on $c_{0}$ mapping $f$ to the function 
\begin{equation*}
n\mapsto \left\{ 
\begin{array}{ll}
\lambda f(n) & \text{if }\left\vert f(n)\right\vert \geq t\text{,} \\ 
f(n) & \text{otherwise.}%
\end{array}%
\right. 
\end{equation*}%
Given a normalized positive block basis $\left( f_{n}\right) $, one can
consider the smallest subspace of $c_{0}$ that contains $\left( f_{n}\right) 
$ and it is invariant under $S_{t,\lambda }$ for every $t,\lambda \in \left[
0,1\right] $. Then arguing as in the proof of Gowers' oscillation-stability
theorem one can deduce from Theorem \ref{Theorem:multiGowers} the following
result.

\begin{theorem}
\label{Theorem:oscillation}Suppose that $F:\mathrm{PS}(c_{0})\rightarrow 
\mathbb{R}$ is a Lipschitz map, and $\varepsilon >0$. There exists a
positive normalized block sequence $\left( f_{n}\right) $ such that the
oscillation of $F$ on the positive part of the sphere of the smallest
subspace of $c_{0}$ containing $\left( f_{n}\right) $ and invariant under $%
S_{t,\lambda }$ for $t,\lambda \in \left[ 0,1\right] $ is at most $%
\varepsilon $.
\end{theorem}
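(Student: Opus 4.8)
The plan is to deduce Theorem~\ref{Theorem:oscillation} from Theorem~\ref{Theorem:multiGowers} by discretizing $\mathrm{PS}(c_{0})$ exactly as in Gowers' original argument, but with the larger family of operations. First I would fix the Lipschitz constant, say $F$ is $1$-Lipschitz without loss of generality, and choose $k\in\mathbb{N}$ large enough that $1/k<\varepsilon/C$ for a suitable absolute constant $C$. I would then fix the $\varepsilon$-net $\{1/k,2/k,\ldots,k/k\}$ of $[0,1]$ and identify $\mathrm{FIN}_{k}$ with the set of finitely supported $f\in\mathrm{PS}(c_{0})$ whose nonzero values all lie in this net, via $b\mapsto b/k$. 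Under this identification a block sequence in $\mathrm{FIN}_{k}$ corresponds to a positive normalized block basis taking values in the net, and pointwise sum of disjointly supported functions corresponds to the ordinary sum in $c_{0}$, which is again in $\mathrm{PS}(c_{0})$ because the supports are disjoint and each summand has sup-norm $1$.

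Next I would record the key observation already flagged in the excerpt: for $\lambda,t\in[0,1]$ the operator $S_{t,\lambda}$, when restricted to the net-valued vectors and followed by re-quantization to the net, is (up to an error $O(1/k)$ in sup-norm) one of the generalized tetris operations $f\colon k+1\to j+1$ — indeed $S_{t,\lambda}$ multiplies the coordinates that are $\geq t$ by $\lambda$ and fixes the rest, and after snapping the results back to $\{0,1/k,\ldots,1\}$ this is exactly the effect of applying a nondecreasing surjection $f$ to the integer labels $b(n)\in\{0,1,\ldots,k\}$. More precisely, every nondecreasing map $\{0,\ldots,k\}\to\{0,\ldots,k\}$ of the form ``fix a threshold and a scaling'' arises this way, and these generate all generalized tetris operations by composition; conversely every generalized tetris operation is realized, up to $1/k$-perturbation, by a finite composition of such $S_{t,\lambda}$. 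Consequently, if $(b_{n})$ is a block sequence in $\mathrm{FIN}_{k}$ and $(f_{n})=(b_{n}/k)$, then the set $\bigcup_{j=1}^{k}\frac{1}{k}\,\mathrm{TS}_{j}(b_{n})$ is a $O(1/k)$-net in $\ell_{\infty}$-distance for the positive part of the sphere of the smallest $S_{t,\lambda}$-invariant subspace containing $(f_{n})$ — one checks that an arbitrary positive norm-one element of that subspace has the form $\sum_i g_i$ for disjointly supported $g_i$ each obtained from some $f_i$ by a finite composition of $S_{t,\lambda}$'s, hence lies within $O(1/k)$ of an element of that net.

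Then I would run the coloring argument. Partition $\mathbb{R}$ into intervals of length $\varepsilon/4$ and color $b\in\mathrm{FIN}_{\leq k}$ by the interval containing $F(b/k)$ — a finite coloring since $F$ is bounded on the bounded set $\mathrm{PS}(c_{0})\cap\{$finitely supported$\}$. Apply Theorem~\ref{Theorem:multiGowers} to obtain a block sequence $(b_{n})$ with each $\mathrm{TS}_{j}(b_{n})$ monochromatic; this means $F$ varies by at most $\varepsilon/4$ on $\bigcup_{j=1}^{k}\frac{1}{k}\mathrm{TS}_{j}(b_{n})$. Now take any two positive norm-one vectors $x,y$ in the smallest $S_{t,\lambda}$-invariant subspace spanned by $(f_{n})=(b_{n}/k)$; by the net property there are net elements $x',y'$ with $\|x-x'\|,\|y-y'\|=O(1/k)<\varepsilon/(4)$, so $|F(x)-F(y)|\le|F(x)-F(x')|+|F(x')-F(y')|+|F(y')-F(y)|\le \varepsilon/4+\varepsilon/4+\varepsilon/4<\varepsilon$, using the Lipschitz bound on the first and third terms and monochromaticity on the middle one. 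This bounds the oscillation by $\varepsilon$.

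The main obstacle is the bookkeeping in the second step: verifying that the $\ell_\infty$-quantizations of the operators $S_{t,\lambda}$ acting on net-valued vectors really do coincide, up to a uniformly controlled $1/k$-error, with the full family of generalized tetris operations, and dually that every element of the abstract $S_{t,\lambda}$-invariant subspace is approximable by a disjoint sum of images of the $f_n$ under finitely many such operators. This requires being careful that the threshold $t$ and scalar $\lambda$ can be taken from the finite net without losing any generalized tetris operation, that compositions do not accumulate error badly (one only ever needs boundedly many compositions, since a generalized tetris operation $k+1\to j+1$ is a composition of at most $k$ of the basic $T_i$), and that the normalization ``maximum value $j$'' in the definition of $\mathrm{TS}_{j}$ matches the requirement that the block basis elements have norm exactly $1$. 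Once this dictionary is nailed down, the rest is the standard Gowers oscillation-stability deduction as in \cite[Theorem 6]{gowers_lipschitz_1992} and \cite[Corollary 2.26]{todorcevic_introduction_2010}.
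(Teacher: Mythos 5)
Your overall strategy (discretize $\mathrm{PS}(c_{0})$, translate the operators $S_{t,\lambda}$ into generalized tetris operations, apply Theorem \ref{Theorem:multiGowers}, and finish with the Lipschitz/triangle-inequality argument) is exactly the route the paper has in mind; the paper itself only gives a one-line pointer to Gowers' original deduction, so the burden of the details is entirely on you. The problem is that the step you yourself flag as ``the main obstacle'' and then dismiss as bookkeeping is in fact where the argument breaks as written. Your key claim is that $\bigcup_{j}\frac{1}{k}\mathrm{TS}_{j}(b_{n})$ is an $O(1/k)$-net for the positive part of the sphere of the smallest $S_{t,\lambda}$-invariant subspace containing $(f_{n})$. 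This is false for the full family $\{S_{t,\lambda}:t,\lambda\in[0,1]\}$. The point is that $S_{t,\lambda}$ acts on coordinate values by $v\mapsto v$ for $v<t$ and $v\mapsto\lambda v$ for $v\geq t$, and this map has a downward jump of size $(1-\lambda)t$ at $v=t$; it is therefore \emph{not} within $O(1/k)$ of any nondecreasing map unless $\lambda$ is within $O(1/k)$ of $1$ or $t$ is near $0$. Concretely, if $f_{1}$ takes the values $0.6$ and $1$ on its support, then $S_{0.8,\,0.1}(f_{1})$ takes the values $0.6$ and $0.1$ --- the order of the values is reversed --- and $S_{0.8,0.1}(f_{1})+f_{2}$ is a positive norm-one element of the invariant subspace whose sup-distance to every element of $\bigcup_{j}\frac{1}{k}\mathrm{TS}_{j}(b_{n})$ is at least $0.25$, independently of $k$, since every element of $\mathrm{TS}_{j}(b_{n})$ arises from \emph{nondecreasing} surjections and hence preserves the weak order of the values of $b_{1}$ on each block. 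So monochromaticity of the tetris subspaces gives you no control of $F$ at such points, and your chain of inequalities does not close. (There is a second, related, unaddressed issue: the smallest invariant \emph{linear} subspace is closed under arbitrary linear combinations, so its positive sphere is generically much larger than the set of disjoint sums $\sum_{i}g_{i}$ of single images that you describe; e.g.\ from $f_{1}$ and $S_{t,\lambda}(f_{1})$ one can already extract individual coordinate vectors by subtraction.)

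Two smaller remarks. First, the correct dictionary uses a geometric net $\{0\}\cup\{(1+\delta)^{i-k}:1\leq i\leq k\}$ rather than the arithmetic net $\{j/k\}$: under the geometric identification the basic tetris map $T_{i}$ corresponds \emph{exactly} to $S_{t,\lambda}$ with $t=(1+\delta)^{i-k}$ and $\lambda=(1+\delta)^{-1}$, i.e.\ to thresholds and scalars adapted to the net, for which the value map is monotone on net points; this is the sense in which the paper says the $T_{i}$ ``correspond to'' the $S_{t,\lambda}$. Any repaired proof has to reduce the action of an arbitrary $S_{t,\lambda}$ (or of the invariant subspace it generates) to these adapted ones, and that reduction --- not the quantization error estimates --- is the actual content of the deduction. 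Second, at the end you invoke monochromaticity of the union $\bigcup_{j}\frac{1}{k}\mathrm{TS}_{j}(b_{n})$, but Theorem \ref{Theorem:multiGowers} only makes each $\mathrm{TS}_{j}(b_{n})$ monochromatic separately; you should note that a norm-one positive vector quantizes into $\mathrm{TS}_{k}(b_{n})$ (its maximal coordinate equals $1$), so only the top layer is ever needed. That point is easily fixed; the net property is not.
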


The rest of this paper consists of three sections. In Section \ref%
{Section:tetris} we present a proof of Theorem \ref{Theorem:multiGowers}. In
Section \ref{Section:multidimensional} we explain how the proof of Theorem %
\ref{Theorem:multiGowers} can be modified to prove its multidimensional
generalization. Finally in Section \ref{Section:abstract} we recall the
theory of layered partial semigroups developed in \cite{farah_partition_2002}%
, and present in this setting a common generalization of the
(multidimensional version of) Theorem \ref{Theorem:multiGowers} and the
Galvin--Glazer--Hindman theorem on finite products.

\subsection*{Acknowledgments}

We are grateful to David Fernandez, Aleksandra Kwiatkowska, S\l awomir
Solecki, and Kostas Tyros for their comment and suggestions. We are also
thank Aleksandra Kwiatkowska for pointing out a mistake in an earlier
version of this paper, and Ilijas Farah for referring us to \cite%
{farah_partition_2002} and to the theory of layered partial semigroups.

\section{Gowers' theorem for generalized tetris operations\label%
{Section:tetris}}

Our proof of Theorem \ref{Theorem:multiGowers} uses the tool of idempotent
ultrafilters, similarly as Gowers' original proof from \cite%
{gowers_lipschitz_1992}. In the following we will frequently use the
notation of ultrafilter quantifiers \cite[\S 1.1]%
{todorcevic_introduction_2010}, which are defined as follows. If $\mathcal{U}
$ is an ultrafilter on $\mathrm{FIN}_{k}$ and $\varphi (x)$ is a first-order
formula, then $\left( \mathcal{U}b\right) \varphi \left( b\right) $ means
that the set of $b\in \mathrm{FIN}_{k}$ such that $\varphi \left( b\right) $
holds belongs to $\mathcal{U}$. A similar notation applies to ultrafilters
on an arbitrary set.

Adopting the terminology of \cite[\S 2.4]{todorcevic_introduction_2010}, we
say that an ultrafilter $\mathcal{U}$ on $\mathrm{FIN}_{k}$ is \emph{cofinite%
} if $\left( \forall n\in \omega \right) \left( \mathcal{U}b\right) $, $%
b(n)=0$. The set $\gamma \mathrm{FIN}_{k}$ of cofinite ultrafilters on $%
\mathrm{FIN}_{k}$ is endowed with a canonical\emph{\ semigroup operation},
defined by setting $A\in \mathcal{U}+\mathcal{V}$ if and only if $\left( 
\mathcal{U}b\right) \left( \mathcal{V}b^{\prime }\right) $, $\mathrm{Supp}%
\left( b\right) <\mathrm{Supp}\left( b^{\prime }\right) $ and $b+b^{\prime
}\in A$. Furthermore $\gamma \mathrm{FIN}_{k}$ is endowed with a canonical
compact Hausdorff topology. Such a topology has a clopen basis consisting of
sets of the form $\widehat{A}=\left\{ \mathcal{U}\in \gamma \mathrm{FIN}%
_{k}:A\in \mathcal{U}\right\} $ for $A\subset \mathrm{FIN}_{k}$. Endowed
with such a topology and semigroup operation, $\gamma \mathrm{\mathrm{FIN}}%
_{k}$ is a\emph{\ compact right topological semigroup} \cite[\S 2.4]%
{todorcevic_introduction_2010}. This means that the right multiplication map 
$\mathcal{U}\mapsto \mathcal{U}+\mathcal{V}$ is continuous for any $\mathcal{%
V}\in \gamma \mathrm{FIN}_{k}$. Any generalized tetris operation $f:\mathrm{%
FIN}_{k}\rightarrow \mathrm{FIN}_{j}$ admits a canonical extension to a
continuous homomorphism $f:\gamma \mathrm{FIN}_{k}\rightarrow \mathrm{FIN}%
_{j}$, defined by letting $A\in f\left( \mathcal{U}\right) $ iff $\left( 
\mathcal{U}b\right) $ $f\left( b\right) \in A$. In the following we will
repeatedly use the well know result, due to Ellis and Namakura, that any
compact right topological semigroup contains an \emph{idempotent element} 
\cite[Lemma 2.1]{todorcevic_introduction_2010}.

The following lemma can be seen as a refinement of \cite[Lemma 3]%
{gowers_lipschitz_1992}, and it is the core of the proof of Theorem \ref%
{Theorem:multiGowers}.

\begin{lemma}
\label{Lemma:idempotent}There exists a sequence $\left( \mathcal{U}%
_{k}\right) $ of cofinite ultrafilters $\mathcal{U}_{k}$ on $\mathrm{FIN}%
_{k} $ such that for any $0<j\leq k$ and for any nondecreasing surjection $%
f:k+1\rightarrow j+1$, $\mathcal{U}_{k}+\mathcal{U}_{j}=\mathcal{U}_{j}+%
\mathcal{U}_{k}=\mathcal{U}_{k}$ and $f\left( \mathcal{U}_{k}\right) =%
\mathcal{U}_{j}$.
\end{lemma}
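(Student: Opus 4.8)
The plan is to build the ultrafilters $\mathcal{U}_k$ by downward induction on $k$, exploiting the semigroup structure on $\gamma\mathrm{FIN}_k$ and the continuous homomorphisms induced by the generalized tetris operations. Observe first that up to composition it suffices to control the "elementary" nondecreasing surjections $f:k+1\to k$ (those that collapse exactly one adjacent pair of values), since a general nondecreasing surjection $k+1\to j+1$ factors as a composite of such maps, and both the ultrafilter identities and the relation $f(\mathcal{U}_k)=\mathcal{U}_j$ are preserved under composition once they hold for each factor. However, it is cleaner to work with all of $\mathrm{FIN}_{\le k}$ at once: form the disjoint union $X_k=\mathrm{FIN}_1\sqcup\cdots\sqcup\mathrm{FIN}_k$ and consider $\gamma X_k$, or equivalently keep track of the whole finite tuple $(\mathcal{U}_1,\ldots,\mathcal{U}_k)$ inside the product of the $\gamma\mathrm{FIN}_j$.

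First I would set up the right-topological semigroup $S$ whose elements are tuples $(\mathcal{V}_1,\ldots,\mathcal{V}_k)$ with $\mathcal{V}_j\in\gamma\mathrm{FIN}_j$, satisfying the \emph{compatibility condition} $f(\mathcal{V}_k)=\mathcal{V}_j$ for every nondecreasing surjection $f:k+1\to j+1$ (which forces all lower $\mathcal{V}_j$ to be determined by $\mathcal{V}_k$ when $f$ ranges over surjections onto $j+1$ — one checks that $f(\mathcal{V}_k)$ is independent of the choice of such $f$, using that any two differ by a permutation-like rearrangement that is in fact trivial since $f$ is nondecreasing, so really $\mathcal{V}_j$ is literally the image of $\mathcal{V}_k$ under the canonical collapse). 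This set $S$ is closed: each condition $\widehat{f(\mathcal{V}_k)=\mathcal{V}_j}$ is closed because $f:\gamma\mathrm{FIN}_k\to\gamma\mathrm{FIN}_j$ is continuous and the diagonal of a compact Hausdorff space is closed. It is nonempty because, starting from any cofinite $\mathcal{V}_k\in\gamma\mathrm{FIN}_k$, the tuple $(f_1(\mathcal{V}_k),\ldots,f_{k-1}(\mathcal{V}_k),\mathcal{V}_k)$ lies in $S$, where $f_j$ is the canonical collapse onto $j+1$; the images are cofinite since $f$ maps $0$ to $0$. It is a subsemigroup of the product under coordinatewise $+$: this is exactly the statement that each $f:\gamma\mathrm{FIN}_k\to\gamma\mathrm{FIN}_j$ is a semigroup homomorphism, which is recorded in the excerpt, together with the fact that $f$ preserves the "disjoint support then sum" structure — and one must check that $f(b)+f(b')$ still has the right maximum value $j$ when $b+b'$ has maximum value $k$ and $\mathrm{Supp}(b)<\mathrm{Supp}(b')$, so the semigroup operation on $\gamma\mathrm{FIN}_j$ genuinely restricts.

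Next, apply Ellis--Namakura to the compact right-topological semigroup $S$ (right topological because each coordinate is, and the product/subspace of right topological semigroups is right topological) to get an idempotent $(\mathcal{U}_1,\ldots,\mathcal{U}_k)\in S$. Idempotency in $S$ means $\mathcal{U}_j+\mathcal{U}_j=\mathcal{U}_j$ for every $j$, and the tuple lies in $S$ so $f(\mathcal{U}_k)=\mathcal{U}_j$ holds for all nondecreasing surjections $f:k+1\to j+1$, which is one half of the claim. The remaining identities $\mathcal{U}_k+\mathcal{U}_j=\mathcal{U}_j+\mathcal{U}_k=\mathcal{U}_k$ for $0<j\le k$ do not follow merely from coordinatewise idempotency, so here is the main obstacle, and I would handle it exactly as in Gowers' original argument \cite{gowers_lipschitz_1992}: after obtaining the idempotent, one refines it. Concretely, fix the idempotent $\mathcal{U}_k$ and consider the closed sub-semigroup $E_k$ of $\gamma\mathrm{FIN}_k$ consisting of ultrafilters $\mathcal{W}$ with $\mathcal{W}+\mathcal{U}_k=\mathcal{U}_k+\mathcal{W}=\mathcal{W}$ (nonempty since it contains $\mathcal{U}_k$, using $\mathcal{U}_k+\mathcal{U}_k=\mathcal{U}_k$; closed by continuity of right multiplication and a standard argument for the left factor; a semigroup by associativity), and similarly track how the $f$'s behave. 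The key point is that $f(\mathcal{U}_k+\mathcal{W})=f(\mathcal{U}_k)+f(\mathcal{W})=\mathcal{U}_j+f(\mathcal{W})$ and likewise on the other side, so applying Ellis--Namakura inside the appropriate closed subsemigroup of the tuple space $S$ — namely the tuples $(\mathcal{W}_1,\ldots,\mathcal{W}_k)\in S$ with $\mathcal{W}_j$ absorbing $\mathcal{U}_j$ on both sides — produces a new idempotent tuple, now called $(\mathcal{U}_1,\ldots,\mathcal{U}_k)$, which additionally satisfies $\mathcal{U}_j+\mathcal{U}_k=\mathcal{U}_k+\mathcal{U}_j=\mathcal{U}_k$ by construction after one checks the compatibility of all these closure conditions. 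One may need to iterate this refinement $k$ times, once for each level $j$, taking at each stage the idempotent inside the previously constructed absorbing subsemigroup; finiteness of $k$ guarantees the process terminates, and nesting the closed conditions ensures the final tuple satisfies all of them simultaneously. The verification that at each stage the relevant set of tuples is a nonempty closed subsemigroup of $S$ — nonemptiness being the delicate part, which follows from the previous idempotent together with the absorption identities already achieved — is the technical heart, but it is entirely parallel to the one-operation case in \cite{todorcevic_introduction_2010,gowers_lipschitz_1992}.
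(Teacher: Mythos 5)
There is a genuine gap at the foundation of your argument: the claim that the compatibility set $S$ (tuples with $f(\mathcal{V}_k)=\mathcal{V}_j$ for \emph{every} nondecreasing surjection $f:k+1\rightarrow j+1$) is nonempty. Your justification rests on the assertion that $f(\mathcal{V}_k)$ is independent of the choice of $f$, because any two such surjections ``differ by a permutation-like rearrangement that is in fact trivial.'' This is false, and it is precisely the difficulty that distinguishes this lemma from the single-tetris-operation case. Already for $k=2$, $j=1$ there are two nondecreasing surjections $\{0,1,2\}\rightarrow\{0,1\}$, namely $f_a$ with $f_a(1)=0$ and $f_b$ with $f_b(1)=1$; the induced maps on $\mathrm{FIN}_2$ send $b$ to the indicator function of $\{n:b(n)=2\}$ and of $\{n:b(n)\geq 1\}$ respectively, and for a cofinite ultrafilter concentrating on elements taking both values $1$ and $2$ these two images are different ultrafilters (they concentrate on disjoint sets of $\mathrm{FIN}_1$). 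So for a generic $\mathcal{V}_k$ the tuple $(f_1(\mathcal{V}_k),\ldots,\mathcal{V}_k)$ does \emph{not} lie in $S$, and nonemptiness of $S$ is the heart of the lemma, not a triviality. The paper earns it by seeding the construction with ultrafilters concentrating on the sets $M_j=\{b\in\mathrm{FIN}_j: b(n)\in\{0,j\}\ \text{for all }n\}$, on which all generalized tetris operations genuinely coincide (every nondecreasing surjection restricted to $M_j$ equals the canonical composite $f_{i+1}\circ\cdots\circ f_j$ there).

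Even granting nonemptiness of $S$, your iterated refinement to obtain the absorption identities $\mathcal{U}_j+\mathcal{U}_k=\mathcal{U}_k+\mathcal{U}_j=\mathcal{U}_k$ is only sketched, and the step you yourself flag as delicate --- nonemptiness of each successive absorbing subsemigroup --- is exactly where the same compatibility problem recurs: a standard witness such as $\mathcal{U}_{k-1}+\mathcal{V}+\mathcal{U}_{k-1}$ must be mapped correctly by \emph{all} the surjections simultaneously, which presupposes a $\mathcal{V}$ already satisfying the full compatibility condition. The paper sidesteps the iteration by an explicit construction: it recursively builds idempotent tuples $(p_j^{(k)})$ with $p_j^{(k)}+p_{j-1}^{(k)}=p_j^{(k)}$, proving nonemptiness of each compact subsemigroup $\Sigma_k$ via the concrete witness $q_j=p_j^{(k-1)}+p_{j-1}^{(k-1)}+\cdots+p_1^{(k-1)}$, and then sets $\mathcal{U}_k=p_1^{(k)}+\cdots+p_k^{(k)}$, from which the absorption and compatibility identities follow by direct computation. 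Both of these missing ingredients would have to be supplied for your plan to go through.
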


\begin{proof}
We define by recursion on $k$ sequences $(p_{j}^{(k)})$ of idempotent
ultrafilters $p_{j}^{(k)}\in \gamma \mathrm{FIN}_{j}$ such that, for every $%
k,i,j\in \mathbb{N}$ and nondecreasing surjection $f:j+1\rightarrow i+1$,

\begin{enumerate}
\item $f(p_{j}^{(k)})=p_{i}^{(k)}$,

\item $p_{j}^{(k+1)}=p_{j}^{(k)}$ for $j\leq k$, and

\item $p_{j}^{(k)}+p_{j-1}^{(k)}=p_{j}^{(k)}$ for $2\leq j\leq k$.
\end{enumerate}

Granted the construction of the sequences $(p_{j}^{(k)})$, we can set $%
\mathcal{U}_{k}:=p_{1}^{(k)}+p_{2}^{(k)}+\cdots +p_{k}^{(k)}$. Observe that
(3) implies that $p_{k}^{(k)}+p_{j}^{(k)}=p_{k}^{(k)}$ for $j\leq k$. Hence $%
\mathcal{U}_{k}$ is idempotent, and $\mathcal{U}_{k}+\mathcal{U}_{k+1}=%
\mathcal{U}_{k+1}+\mathcal{U}_{k}=\mathcal{U}_{k+1}$ for every $k\in \mathbb{%
N}$ by\ (2). Furthermore it follows from (1) and (3) that $f\left( \mathcal{U%
}_{k}\right) =\mathcal{U}_{j}$ for any nondecreasing surjection $%
f:k+1\rightarrow j+1$. Indeed suppose that $f:k+1\rightarrow j+1$ is a
nondecreasing surjection. Then $f|_{i+1}:i+1\rightarrow f\left( i\right) +1$
is a nondecreasing surjection for every $i\in k+1$. Hence we can conclude by
(1),(3), and the fact that the $p_{i}^{(k)}$'s are idempotent that%
\begin{equation*}
f\left( \mathcal{U}_{k}\right) =f(p_{1}^{(k)})+\cdots
+f(p_{k}^{(k)})=p_{f\left( 1\right) }^{(k)}+p_{f\left( 2\right)
}^{(k)}+\cdots +p_{f\left( k\right) }^{(k)}=p_{1}^{(j)}+\cdots +p_{j}^{(j)}=%
\mathcal{U}_{j}\text{.}
\end{equation*}

We now show how to construct the sequences $(p_{j}^{(k)})$. In the following
we will convene that $p_{0}^{(k)}$ is the function on $\omega $ constantly
equal to $0$, which can be seen as the unique element of $\mathrm{FIN}_{0}$.
We let $\Pi \mathrm{\ }$be the product of $\gamma \mathrm{FIN}_{j}$ for $%
j\in \mathbb{N}$. Observe that $\Pi $ has a natural compact right
topological semigroup structure, where the topology is the product topology
and the operation is the entrywise sum.

For $k=1$, consider the compact semigroup $\Sigma _{1}\subset \Pi $ of
sequences $(q_{j})$ satisfying $f\left( q_{j}\right) =q_{i}$ for any $i,j\in 
\mathbb{N}$ and nondecreasing surjection $f:j+1\rightarrow i+1$. We observe
that $\Sigma _{1}$ is nonempty. Define for $j\in \mathbb{N}$ the set $%
M_{j}:=\left\{ b\in \mathrm{FIN}_{j}:\forall n\in \omega \text{, }b(n)\in
\left\{ 0,j\right\} \right\} $. Fix for any $j\geq 2$ a nondecreasing
surjection $f_{j}:j+1\rightarrow j$. Observe that $f_{j}$ maps $M_{j}$
bijectively onto $M_{j-1}$. Furthermore, for any $0<i\leq j$ and
nondecreasing surjection $f:j+1\rightarrow i+1$, one has that $%
f|_{M_{j}}=\left( f_{i+1}\circ f_{i}\circ \cdots \circ f_{j}\right) |_{M_{j}}
$. We denote by $\left( f_{j}|_{M_{j}}\right) ^{-1}:M_{j-1}\rightarrow M_{j}$
the inverse map of $f_{j}$. Let $p_{1}$ be any element of $\gamma \mathrm{FIN%
}_{1}$. One can define a sequence $(q_{j})$ that belongs to $\Sigma _{1}$
and such that $M_{j}\in q_{j}$ by recursion on $j\in \mathbb{N}$, by letting 
$q_{j+1}=\left( f_{j+1}|_{M_{j+1}}\right) ^{-1}(q_{j})$ . This concludes the
proof that $\Sigma _{1}$ is nonempty. We can then let $(p_{j}^{(1)})$ be an
idempotent element of $\Sigma _{1}$. This concludes the construction for $k=1
$.

Suppose that the sequences $(p_{j}^{(\ell )})$ have been defined for $\ell
=1,2,\ldots ,k-1$ satisfying (1),(2),(3) above. We explain how to define $%
(p_{j}^{(k)})$. Consider the compact semigroup $\Sigma _{k}\subset \Pi $ of
sequences $\left( q_{j}\right) $ such that, for any $i,j\in \mathbb{N}$ and
nondecreasing surjection $f:j+1\rightarrow i+1$, $f\left( q_{j}\right)
=q_{i} $, $q_{j}=p_{j}^{(k-1)}$ for $j\in k$, and $q_{j}+p_{i}^{(k-1)}=q_{j}$
for $j\in \mathbb{N}$ and $i\leq \min \left\{ k-1,j\right\} $. We need to
show that $\Sigma _{k}$ is nonempty. Set 
\begin{equation*}
q_{j}:=p_{j}^{(k-1)}+p_{j-1}^{(k-1)}+\cdots +p_{1}^{(k-1)}
\end{equation*}%
for every $j\in \mathbb{N}$. Observe that for $j\leq k-1$ one has that $%
q_{j}=p_{j}^{(k-1)}$ in view of (3). If $j\geq k$ then 
\begin{equation*}
q_{j}=p_{j}^{(k-1)}+p_{j-1}^{(k-1)}+\cdots +p_{k-1}^{(k-1)}\text{,}
\end{equation*}%
again by (3). Since $p_{i}^{(k-1)}$ is idempotent for any $i\in \mathbb{N}$,
it follows from (1) that $f\left( q_{j}\right) =q_{i}$ for any nondecreasing
surjection $f:j+1\rightarrow i+1$. Furthermore for $j\in \mathbb{N}$ and $%
i\leq \min \left\{ k-1,j\right\} $ one has that $q_{j}+p_{i}^{(k-1)}=q_{j}$
in view of (2). This shows that the sequence $\left( q_{j}\right) $ belongs
to $\Sigma _{k}$. One can then let $(p_{j}^{(k)})$ be any idempotent element
of $\Sigma _{k}$. This concludes the recursive construction.
\end{proof}

Theorem \ref{Theorem:multiGowers} can now be deduced from Lemma \ref%
{Lemma:idempotent} through a standard argument. We present a sketch of the
proof, for convenience of the reader.

\begin{proof}[Proof of Theorem \protect\ref{Theorem:multiGowers}]
Suppose that $\mathcal{U}_{1},\mathcal{U}_{2},\ldots ,\mathcal{U}_{k}$ are
the cofinite ultrafilters constructed in Lemma \ref{Lemma:idempotent}. Fix a
finite coloring $c$ of $\mathrm{FIN}_{j}$ for $1\leq j\leq k$, and let $%
A_{j} $ be an element of $\mathcal{U}_{j}$ such that $c$ is constant on $%
A_{j}$ for $j=1,2,\ldots ,k$. We define, by recursion on $n\in \omega $, $%
b_{n}\in \mathrm{FIN}_{k}$ with $\mathrm{Supp}\left( b_{i}\right) <\mathrm{%
Supp}\left( b_{j}\right) $ for $i<j$, such that the following conditions
hold: for any $j_{0},\ldots ,j_{n+2}\in k+1$ and nondecreasing surjections $%
f_{i}:k+1\rightarrow j_{i}+1$ for $i\in n+3$,

\begin{enumerate}
\item $f_{0}\circ b_{0}+\cdots +f_{n}\circ b_{n}$ belongs to $A_{\max
\left\{ j_{0},\ldots ,j_{n}\right\} }$,

\item $\left( \mathcal{U}_{k}y\right) $, $f_{0}\circ b_{0}+\cdots
+f_{n}\circ b_{n}+f_{n+1}\circ y$ belongs to $A_{\max \left\{ j_{0},\ldots
,j_{n+1}\right\} }$, and

\item $\left( \mathcal{U}_{k}y\right) \left( \mathcal{U}_{k}z\right) $, $%
f_{0}\circ b_{0}+\cdots +f_{n}\circ b_{n}+f_{n+1}\circ y+f_{n+2}\circ z$
belongs to $A_{\max \left\{ j_{0},\ldots ,j_{n+2}\right\} }$.
\end{enumerate}

Suppose that such a sequence has been defined up to $n$. From (2) and (3),
we can conclude that there exists $b_{n+1}\in \mathrm{FIN}_{k}$ such that $%
\mathrm{Supp}\left( b_{n+1}\right) >\mathrm{Supp}\left( b_{n}\right) $
satisfying (1) and (2). Then (3) follows from (2), the properties of
ultrafilter quantifiers, and the facts that, for $1\leq j\leq k$, $\mathcal{U%
}_{j}+\mathcal{U}_{k}=\mathcal{U}_{k}+\mathcal{U}_{j}=\mathcal{U}_{k}$ and $%
f\left( \mathcal{U}_{k}\right) =\mathcal{U}_{j}$ for any nondecreasing
surjection $f:k+1\rightarrow j+1$. This concludes the recursive
construction.\ In view of (1), the sequence $\left( b_{n}\right) $ obtained
through this construction has the property that $\mathrm{TS}_{j}\left(
b_{n}\right) $ is contained in $A_{j}$, and hence $c$ is constant on $%
\mathrm{TS}_{j}\left( b_{n}\right) $ for $j=1,2\ldots ,k$.
\end{proof}

\section{A multidimensional generalization\label{Section:multidimensional}}

Gowers' theorem on $\mathrm{FIN}_{k}$ can be seen as a generalization of
Hindman's theorem for sets of finite unions \cite{hindman_finite_1974}. Such
a theorem asserts that for any finite coloring of $\mathrm{FIN}_{1}$, there
exists a block sequence $\left( b_{n}\right) $ in $\mathrm{FIN}_{1}$ such
that $\mathrm{TS}_{1}\left( b_{n}\right) $ is monochromatic. Observe that
one can identify $\mathrm{FIN}_{1}$ with the set of nonempty finite subsets
of $\omega $. Then $\mathrm{TS}_{1}\left( b_{n}\right) $ is just the
collection of all finite unions of the elements of the given sequence.
Hindman's theorem on finite unions is the particular instance of Gowers'
theorem for $k=1$.

In another direction, Hindman's theorem on finite unions was generalized,
independently, by Milliken and Taylor \cite%
{milliken_ramseys_1975,taylor_canonical_1976}; see also \cite%
{bergelson_polynomial_2014}. Fix $m\in \mathbb{N}$ and consider the set $%
\mathrm{FIN}_{1}^{\left[ m\right] }$ of block sequences in $\mathrm{\mathrm{%
FIN}}_{1}$ of length $m$. The Milliken-Taylor theorem on finite unions
asserts that, for any finite coloring of $\mathrm{FIN}_{1}^{\left[ m\right]
} $, there exists an infinite block sequence $\left( b_{n}\right) $ in $%
\mathrm{FIN}_{1}$ such that the set $\mathrm{TS}_{1}\left( b_{n}\right) ^{%
\left[ m\right] }$ of $m$-tuples of the form%
\begin{equation*}
(b_{n_{0}}+\cdots +b_{n_{\ell _{0}-1}},b_{n_{\ell _{0}}}+\cdots +b_{n_{\ell
_{1}}-1},\ldots ,b_{n_{\ell _{m-1}}}+\cdots +b_{n_{\ell _{m}-1}})
\end{equation*}%
for $0<\ell _{0}<\ell _{2}<\cdots <\ell _{m}$ and $0\leq n_{1}<n_{2}<\cdots
<n_{\ell _{m}-1}$, is monochromatic.

The multidimensional analog of Gowers' theorem for a single tetris operation
is proved in \cite[Corollary 5.26]{todorcevic_introduction_2010}. The
corresponding finite version is considered in \cite{tyros_primitive_2015}.
In a similar spirit, one can consider a multidimensional generalization of
Theorem \ref{Theorem:multidimGowers}. Let $\mathrm{FIN}_{k}^{\left[ m\right]
}$ be the space of block sequences in $\mathrm{FIN}_{k}$ of length $m$, and $%
\mathrm{\mathrm{FI}N}_{\leq k}^{\left[ m\right] }$ be the union of $\mathrm{%
FIN}_{j}^{\left[ m\right] }$ for $j=1,2,\ldots ,k$. If $\left( b_{n}\right) $
is a block sequence in $\mathrm{FIN}_{k}$ and $1\leq j\leq k$, then we
define the \emph{tetris subspace} $\mathrm{TS}_{j}\left( b_{n}\right) ^{%
\left[ m\right] }$ of $\mathrm{FIN}_{k}^{\left[ m\right] }$ generated by $%
\left( b_{n}\right) $ to be the set of elements of $\mathrm{FIN}_{j}^{\left[
m\right] }$ of the form $(a_{0},\ldots ,a_{m-1})$, where $a_{d}$ for $d\in m$
is equal to%
\begin{equation*}
f_{n_{d}}\circ b_{n_{d}}+\cdots +f_{n_{d+1}-1}\circ b_{n_{d+1}-1}
\end{equation*}%
for some $n_{0}=0<n_{1}<n_{2}<\cdots <n_{m}$, $0\leq j_{i}\leq k$ and
nondecreasing surjections $f_{i}:k+1\rightarrow j_{i}+1$ for $i\in n_{m}$
such that $\max \left\{ j_{n_{d}},\ldots ,j_{n_{d+1}-1}\right\} =j$. We can
then state the multidimensional generalization of Theorem \ref%
{Theorem:multiGowers} as follows:

\begin{theorem}
\label{Theorem:multidimGowers}Suppose that $m,k\in \mathbb{N}$. For any
finite coloring of $\mathrm{FIN}_{\leq k}^{\left[ m\right] }$, there exists
an infinite block sequence $\left( b_{n}\right) $ in $\mathrm{FIN}_{k}$ such
that $\mathrm{TS}_{j}\left( b_{n}\right) ^{\left[ m\right] }$ is
monochromatic for every $j=1,2,\ldots ,k$.
\end{theorem}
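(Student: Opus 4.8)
The plan is to mimic the strategy already used for the one-dimensional Theorem~\ref{Theorem:multiGowers}, replacing the single idempotent ultrafilter $\mathcal{U}_j$ on $\mathrm{FIN}_j$ by the product ultrafilters $\mathcal{U}_j^{\otimes m}$ on the space $\mathrm{FIN}_j^{[m]}$ of length-$m$ block sequences, exactly in the manner that the Milliken--Taylor theorem is deduced from Hindman's theorem. Concretely, I would first record that for each $j$ the tensor-type product $\mathcal{V}_j := \mathcal{U}_j + \mathcal{U}_j + \cdots + \mathcal{U}_j$ ($m$ times, in the sense of the iterated ultrafilter sum on disjointly supported tuples) is a well-defined cofinite ultrafilter on $\mathrm{FIN}_k^{[m]}$; the cofiniteness and the compatibility relations $\mathcal{U}_k + \mathcal{U}_j = \mathcal{U}_j + \mathcal{U}_k = \mathcal{U}_k$ and $f(\mathcal{U}_k) = \mathcal{U}_j$ from Lemma~\ref{Lemma:idempotent} are exactly what is needed to make the bookkeeping below go through. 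The key point, as in the Milliken--Taylor argument, is that a single application of the idempotent $\mathcal{U}_k$ can be used to ``split off'' both the tail of the current block and the start of the next block of the $m$-tuple.

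The main step is the recursive construction of the block sequence $(b_n)$ in $\mathrm{FIN}_k$. Fix a finite coloring $c$ of $\mathrm{FIN}_{\leq k}^{[m]}$ and choose, for each $j = 1, 2, \ldots, k$, a set $A_j \in \mathcal{V}_j$ on which $c$ is constant; one should arrange that $A_j$ is ``hereditary'' for the relevant ultrafilter quantifiers, i.e.\ that membership of an $m$-tuple $(a_0,\ldots,a_{m-1})$ in $A_j$ is detected by a chain of $\mathcal{U}_k$-quantifiers applied to the constituent pieces. I would then build $b_n$ so that for every way of decomposing an initial segment $b_0, \ldots, b_n$ (together with finitely many future ``virtual'' blocks $y_0, \ldots, y_\ell$ quantified by $\mathcal{U}_k$) into an $m$-tuple via generalized tetris operations $f_i : k+1 \to j_i+1$, the resulting tuple lands in $A_{\max j_i}$ over the relevant index range --- precisely the $m$-dimensional analogue of conditions (1)--(3) in the proof of Theorem~\ref{Theorem:multiGowers}, but now with a longer ``lookahead'' (roughly $m+2$ virtual blocks, so that both the completion of the last started block and the beginnings of any not-yet-started blocks are covered). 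At each stage the inductive hypothesis on the $\mathcal{U}_k$-quantified statements, combined with idempotency $\mathcal{U}_k + \mathcal{U}_k = \mathcal{U}_k$, the absorption $\mathcal{U}_j + \mathcal{U}_k = \mathcal{U}_k + \mathcal{U}_j = \mathcal{U}_k$, and $f(\mathcal{U}_k) = \mathcal{U}_j$, lets one choose $b_{n+1}$ with support past that of $b_n$ realizing the next level of the hierarchy.

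Having carried out the construction, the conclusion is immediate: by design, every element of $\mathrm{TS}_j(b_n)^{[m]}$ is of the form $(a_0, \ldots, a_{m-1})$ with $a_d = f_{n_d}\circ b_{n_d} + \cdots + f_{n_{d+1}-1}\circ b_{n_{d+1}-1}$ and $\max\{j_{n_d},\ldots,j_{n_{d+1}-1}\} = j$, and such a tuple witnesses the $n$-th condition of the recursion for $n = n_m - 1$, hence lies in $A_j$; therefore $c$ is constant on $\mathrm{TS}_j(b_n)^{[m]}$ for each $j = 1, \ldots, k$. I expect the main obstacle to be purely notational: setting up the right hierarchy of ``partially built $m$-tuple'' conditions with enough lookahead blocks, and verifying that the absorption/homomorphism properties of Lemma~\ref{Lemma:idempotent} propagate through the quantifier chain at the boundary between two consecutive blocks of the tuple (where one block is being closed off and the next opened). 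Since all the genuinely new ingredients --- the compatible family $(\mathcal{U}_j)$ and the relations between them --- are already provided by Lemma~\ref{Lemma:idempotent}, and the passage from a Hindman-type statement to its Milliken--Taylor-type strengthening is by now routine (cf.\ \cite[Corollary 5.26]{todorcevic_introduction_2010}), I would present this proof in a condensed form, emphasizing only the modified recursion and leaving the verification of the quantifier manipulations to the reader.
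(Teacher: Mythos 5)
Your proposal follows essentially the same route as the paper: form the Fubini power $\mathcal{V}_{j}=\mathcal{U}_{j}^{\otimes m}$ of the ultrafilters from Lemma \ref{Lemma:idempotent} on the space of length-$m$ block sequences, and then run the same recursive construction as in Theorem \ref{Theorem:multiGowers} with extra lookahead quantifiers, using idempotency, the absorption identities, and $f(\mathcal{U}_{k})=\mathcal{U}_{j}$; the paper likewise states that any member of $\mathcal{V}_{k}$ witnesses the theorem and leaves the (notationally heavier) details to the reader. Only a small notational caveat: what you write as an iterated sum $\mathcal{U}_{j}+\cdots+\mathcal{U}_{j}$ should be the tensor (Fubini) power on the tuple space, not the semigroup sum on $\mathrm{FIN}_{j}$ itself, as your own description makes clear.
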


In order to prove the Milliken-Taylor theorem, one can consider an
idempotent cofinite ultrafilter $\mathcal{U}_{1}$ on $\mathrm{FIN}_{1}$, and
then the Fubini power $\mathcal{V}_{1}:=\mathcal{U}_{1}^{\otimes m}$. This
is defined as the cofinite ultrafilter on $\mathrm{FIN}_{1}^{\left[ m\right]
}$ such that $A\in \mathcal{V}_{1}$ if and only if $\left( \mathcal{U}%
_{1}b_{1}\right) \cdots \left( \mathcal{U}_{m}b_{m}\right) $, $\left(
b_{1},\ldots ,b_{m}\right) \in A$; see \cite[\S 1.2]%
{todorcevic_introduction_2010}. Then any element of $\mathcal{V}$ witnesses
that the Milliken-Taylor theorem holds. A similar approach works for Theorem %
\ref{Theorem:multidimGowers}. Indeed, consider the cofinite ultrafilter $%
\mathcal{U}_{k}$ on $\mathrm{FIN}_{k}$ given by Lemma \ref{Lemma:idempotent}
and its Fubini power $\mathcal{V}_{k}:=\mathcal{U}_{k}^{\otimes m}$ on $%
\mathrm{FIN}_{k}^{\left[ m\right] }$. Then any element of $\mathcal{V}_{k}$
witness that Theorem \ref{Theorem:multidimGowers} holds. The proof of such a
fact is analogous to the proof of Theorem \ref{Theorem:multiGowers}, and
only notationally heavier. The details are left to the interested reader.

As usual, it follows by compactness from Theorem \ref{Theorem:multidimGowers}
the corresponding finite version, which recovers Corollary 2.3 of \cite%
{bartosova_gowers_2014}

\begin{corollary}
Suppose that $m,k,\ell ,r\in \mathbb{N}$. There exists $n\in \mathbb{N}$
such that for any $r$-coloring of $\mathrm{FIN}_{j}(n)^{\left[ m\right] }$,
there exists a block sequence $\left( b_{0},\ldots ,b_{\ell -1}\right) $ in $%
\mathrm{FIN}_{k}$ of length $\ell $ such that $\mathrm{TS}_{j}(b_{0},\ldots
,b_{\ell -1})^{\left[ m\right] }$ is monochromatic for $j=1,2,\ldots ,k$.
\end{corollary}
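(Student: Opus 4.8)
The plan is to derive this finite version from the infinitary Theorem~\ref{Theorem:multidimGowers} by a standard compactness (König's lemma / ultrafilter) argument, exactly in the spirit of the deductions of Corollary~\ref{Corollary:multiGowers} from Theorem~\ref{Theorem:multiGowers}. First I would fix $m,k,\ell,r\in\mathbb{N}$ and suppose, toward a contradiction, that for every $n\in\mathbb{N}$ there is an $r$-coloring $c_n$ of $\mathrm{FIN}_{\leq k}(n)^{[m]}$ admitting no block sequence $(b_0,\ldots,b_{\ell-1})$ in $\mathrm{FIN}_k(n)$ of length $\ell$ with $\mathrm{TS}_j(b_0,\ldots,b_{\ell-1})^{[m]}$ monochromatic for all $j=1,\ldots,k$. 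Each $c_n$ restricts, via the natural inclusion $\mathrm{FIN}_{\leq k}(n)^{[m]}\hookrightarrow \mathrm{FIN}_{\leq k}(n')^{[m]}$ for $n\le n'$ (identifying a block sequence supported in $\{0,\ldots,n-1\}$ with the same sequence viewed inside $\{0,\ldots,n'-1\}$), to a coloring of the finite initial pieces; since each $\mathrm{FIN}_{\leq k}(n)^{[m]}$ is finite and there are finitely many colors, a diagonal/compactness argument (choosing an ultrafilter on $\mathbb{N}$, or applying König's lemma to the tree of coherent partial colorings) yields a single $r$-coloring $c$ of $\mathrm{FIN}_{\leq k}^{[m]}$ such that for every $n$ the restriction of $c$ to $\mathrm{FIN}_{\leq k}(n)^{[m]}$ agrees with $c_{n'}$ for some $n'\ge n$ that still refuses the desired configuration.

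Next I would apply Theorem~\ref{Theorem:multidimGowers} to this coloring $c$, obtaining an infinite block sequence $(b_n)_{n\in\omega}$ in $\mathrm{FIN}_k$ such that $\mathrm{TS}_j(b_n)^{[m]}$ is $c$-monochromatic for every $j=1,\ldots,k$. In particular the finite initial segment $(b_0,\ldots,b_{\ell-1})$ is a block sequence of length $\ell$ in $\mathrm{FIN}_k$, and its tetris subspaces $\mathrm{TS}_j(b_0,\ldots,b_{\ell-1})^{[m]}$ are contained in the corresponding $\mathrm{TS}_j(b_n)^{[m]}$, hence are $c$-monochromatic. Let $n$ be large enough that $\bigcup_{i<\ell}\mathrm{Supp}(b_i)\subseteq\{0,\ldots,n-1\}$; then $(b_0,\ldots,b_{\ell-1})$ lives inside $\mathrm{FIN}_k(n)$ and all the relevant tuples in $\mathrm{TS}_j(b_0,\ldots,b_{\ell-1})^{[m]}$ lie in $\mathrm{FIN}_{\leq k}(n)^{[m]}$. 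By the choice of $c$, there is $n'\ge n$ with $c$ agreeing on $\mathrm{FIN}_{\leq k}(n)^{[m]}$ with the ``bad'' coloring $c_{n'}$; but then $(b_0,\ldots,b_{\ell-1})$, sitting inside $\mathrm{FIN}_k(n)\subseteq\mathrm{FIN}_k(n')$, is a block sequence of length $\ell$ witnessing that $c_{n'}$ does admit monochromatic tetris subspaces, contradicting the assumption on $c_{n'}$. This contradiction shows the desired $n$ exists.

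A small technical point to check carefully is the coherence of the identifications under which $\mathrm{FIN}_{\leq k}(n)^{[m]}$ embeds into $\mathrm{FIN}_{\leq k}(n')^{[m]}$ and into $\mathrm{FIN}_{\leq k}^{[m]}$: a block sequence $(a_0,\ldots,a_{m-1})$ with $\mathrm{Supp}(a_{m-1})\subseteq\{0,\ldots,n-1\}$ is the ``same'' object in all three, and the notions of block subsequence and of the tetris subspaces $\mathrm{TS}_j(\cdot)^{[m]}$ are defined intrinsically from the supports and the generalized tetris operations, so they are preserved by these inclusions; this is exactly the compatibility that makes the compactness argument go through, and it should be spelled out in a sentence or two. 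The only genuine obstacle is the bookkeeping in the compactness step—organizing the tree or the ultrafilter limit so that the limiting coloring $c$ genuinely ``remembers'' a bad coloring on each finite level—but this is entirely routine and identical to the corresponding step already invoked (without proof) for Corollary~\ref{Corollary:multiGowers}, so I would present it briefly and refer to that argument. Finally, note that the statement as quoted says ``for any $r$-coloring of $\mathrm{FIN}_j(n)^{[m]}$'' for a fixed $j$; one interprets this (as in Corollary~\ref{Corollary:multiGowers}) as a coloring of $\mathrm{FIN}_{\leq k}(n)^{[m]}$, i.e.\ all of $\mathrm{FIN}_1(n)^{[m]}\cup\cdots\cup\mathrm{FIN}_k(n)^{[m]}$ at once, and the conclusion simultaneously for all $j\le k$, which is precisely what the argument above delivers.
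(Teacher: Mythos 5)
Your proof is correct and takes exactly the route the paper intends: the paper derives this corollary from Theorem~\ref{Theorem:multidimGowers} by the standard compactness argument, which is precisely what you carry out, including the (correct) reading of the typo $\mathrm{FIN}_{j}(n)^{\left[ m\right] }$ as $\mathrm{FIN}_{\leq k}(n)^{\left[ m\right] }$. Nothing further is needed.
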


\section{A generalization for layered partial semigroups\label%
{Section:abstract}}

Recall that a \emph{partial semigroup}\textrm{\ }\cite[Definition 1.2]%
{farah_partition_2002} is a set $S$ endowed with a partially defined binary
operation $\left( x,y\right) \mapsto xy$ satisfying $\left( xy\right)
z=x\left( yz\right) $. This equation should be interpreted as asserting that
the left hand side is defined if and only if the right hand side is defined,
and in such a case the equality holds. Suppose that $x$ is an element of a
partial semigroup $S$. Following \cite[Definition 2.1]{farah_partition_2002}%
, we let $\varphi _{S}(x)$ be the set of elements $y$ of $S$ such that $xy$
is defined. More generally, for a subset $A$ of $S$, we let $\varphi
_{S}\left( A\right) $ be the set of elements $y$ of $S$ such that $xy$ is
defined for every $x\in A$. As in \cite[Definition 2.1]{farah_partition_2002}%
, we say that a partial semigroup $S$ is \emph{adequate}---also called \emph{%
directed }in \cite[\S 2.2]{todorcevic_introduction_2010}---if $\varphi
_{S}\left( A\right) $ is nonempty for every finite subset $A$ of $S$.

When $S$ is a directed partial semigroup, the set $\gamma S$ of ultrafilters 
$\mathcal{U}$ over $S$ with the property that $\left( \forall x\in S\right)
\left( \mathcal{U}y\right) $ $xy$ is defined, is a closed nonempty subset of
the space $\beta S$ of ultrafilters over $S$. One can define a compact right
topological semigroup operation on $\gamma S$ by setting $A\in \mathcal{UV}$
if and only if $\left( \mathcal{U}y\right) \left( \mathcal{V}z\right) $ $%
yz\in A$ \cite[Corollary 2.7]{todorcevic_introduction_2010}. More generally,
it is shown in \cite[Theorem 2.2]{farah_partition_2002} that the operation
on $S$ extends to a continuous map from $\beta S\times \gamma S$ to $\beta S$%
.

Suppose that $S,T$ are partial semigroups, and $\sigma :S\rightarrow T$ is a
function. We say that $\sigma $ is a \emph{partial semigroup homomorphism}
if for any $x,y\in S$, $\sigma (x)\sigma (y)$ is defined whenever $xy$ is
defined, and in such a case $\sigma (xy)=\sigma (x)\sigma (y)$ \cite[%
Definition 2.8]{farah_partition_2002}. We say that $\sigma :S\rightarrow T$
is an \emph{adequate partial semigroup homomorphism} if it is a partial
semigroup homomorphism with the property that for any finite subset $A$ of $%
S $ there exists a finite subset $B$ of $T$ such that $\varphi _{T}\left(
B\right) $ is contained in the image under $\sigma $ of $\varphi _{S}\left(
A\right) $.

If $T$ is a partial semigroup and $S\subset T$, then $S$ is an \emph{%
adequate partial subsemigroup }if the inclusion map $S\hookrightarrow T$ is
an adequate partial semigroup homomorphism \cite[Definition 2.10]%
{farah_partition_2002}. We say that a subset $S$ of a partial semigroup $T$
is an \emph{adequate\ ideal} if it is an adequate partial subsemigroup, and
for any $x\in S$ and $y\in T$ one has that $xy$ and $yx$ belong to $S$
whenever they are defined \cite[Definition 2.15]{farah_partition_2002}.
Lemma 2.14 and Lemma 2.16 of \cite{farah_partition_2002} show that, if $%
S\subset T$ is an adequate partial subsemigroup, then $\gamma S$ can be
canonically identified with a subsemigroup of $\gamma T$. If furthermore $S$
is an adequate ideal of $T$, then $\gamma S\ $is an ideal of $\gamma T$. We
now recall the definition of layered partial semigroup from \cite[\S 3]%
{farah_partition_2002}. An element $e$ of a partial semigroup is an \emph{%
identity element} if $ex$ and $xe$ are defined and equal to $x$ for any $%
x\in S$.

\begin{definition}
\label{Definition:layered}A \emph{layered partial semigroup} with $k$ layers
is a partial semigroup $S$ endowed with a partition $\left\{ S_{0},\ldots
,S_{k}\right\} $ such that $S_{0}=\left\{ e\right\} $ for some identity
element $e$ for $S$, and for every $n=1,2,\ldots ,k$, letting $S_{\leq
n}=S_{0}\cup \cdots \cup S_{k}$, one has that $S_{\leq n}$ is an adequate
partial semigroup, $S_{n}$ is an adequate partial subsemigroup of $S$, and
an adequate ideal of $S_{\leq n}$.
\end{definition}

In the following we will assume that $S$ is a layered partial semigroup with 
$k$ layers as witnessed by the partition $\left\{ S_{0},\ldots
,S_{k}\right\} $, and set $S_{\leq n}=S_{0}\cup \cdots \cup S_{n}$. Observe
that it follows from the definition of layered partial semigroup that $%
\gamma S_{n}$ is an ideal of $\gamma S_{\leq n}$, and a subsemigroup of $%
\gamma S$ for $n=1,2,\ldots ,k$.

\begin{definition}
\label{Definition:tetris_action}Suppose that $\mathcal{A}=\left( \mathcal{F}%
_{1},M_{1},\mathcal{F}_{2},M_{2},\ldots ,\mathcal{F}_{k},M_{k}\right) $ is a
tuple such that for every $n=1,2,\ldots ,k$, $\mathcal{F}_{n}$ is a nonempty
finite collection of partial semigroup homomorphisms from $S_{\leq n}$ to $%
S_{\leq n-1}$, and $M_{n}$ is an adequate subsemigroup of $S_{n}$ for $%
n=1,2,\ldots ,k$. We say that $\mathcal{A}$ is a \emph{tetris action} on $S$
if and only if it satisfies for any $n=2,3,\ldots ,k$, and $\sigma \in 
\mathcal{F}_{n}$ the following conditions:

\begin{enumerate}
\item the image of $M_{n}$ under $\sigma $ is an adequate partial
subsemigroup of $M_{n-1}$;

\item the image of $S_{n}$ under $\sigma $ is an adequate partial
subsemigroup of $S_{n-1}$;

\item the restriction of $\sigma $ to $S_{\leq n-1}$ either belongs to $%
\mathcal{F}_{n-1}$, or it is the identity map of $S_{\leq n-1}$, and

\item for any $\sigma _{1},\sigma _{2}\in \mathcal{F}_{n}$ one has that $%
\sigma _{1}|_{M_{n}}=\sigma _{2}|_{M_{n}}$.
\end{enumerate}
\end{definition}

From now on we assume that $\left( \mathcal{F}_{1},M_{1},\mathcal{F}%
_{2},M_{2},\ldots ,\mathcal{F}_{k},M_{k}\right) $ is a tetris action on $S$
as in Definition \ref{Definition:tetris_action}. It follows from \cite[Lemma
2.4]{farah_partition_2002} that for any $n=2,3,\ldots ,n$, any element $%
\sigma $ of $\mathcal{F}_{n}$ admits a continuous extension $\sigma :\beta
S_{\leq n}\rightarrow \beta S_{\leq n-1}$ such that:

\begin{itemize}
\item if $p\in \beta S_{n}$, $q\in \gamma S_{\leq n-1}$, and $\sigma (q)\in
\gamma S_{\leq n-1}$, then $\sigma (pq)=\sigma (p)\sigma (q)$;

\item if $p\in \beta S_{\leq n-1}$, $q\in \gamma S_{n}$, and $\sigma (q)\in
\gamma S_{n}$, then $\sigma (pq)=\sigma (p)\sigma (q)$;

\item $\sigma $ maps $\gamma S_{n}$ to $\gamma S_{n-1}$ and $\gamma M_{n}$
to $\gamma M_{n-1}$.
\end{itemize}

In particular, $\sigma $ induces continuous semigroup homomorphism $\sigma
:\gamma S_{n}\rightarrow \gamma S_{n-1}$ mapping the subsemigroup $\gamma
M_{n}$ to $\gamma M_{n-1}$. The same proof as Lemma \ref{Lemma:idempotent}
shows the following:

\begin{lemma}
\label{Lemma:idempotent_abstract}There exist idempotent elements $\mathcal{U}%
_{n}\in \gamma S_{n}$ for $n=1,2,\ldots ,k$ such that $\sigma \left( 
\mathcal{U}_{n}\right) =\mathcal{U}_{n-1}$ and $\mathcal{U}_{n}\mathcal{U}%
_{n-1}=\mathcal{U}_{n-1}\mathcal{U}_{n}=\mathcal{U}_{n}$ for every $%
n=2,\ldots ,k$ and $\sigma \in \mathcal{F}_{n}$.
\end{lemma}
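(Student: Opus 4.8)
The plan is to imitate the recursive construction carried out in the proof of Lemma~\ref{Lemma:idempotent}, replacing the concrete spaces $\gamma\mathrm{FIN}_{j}$ by the semigroups $\gamma S_{j}$ and the generalized tetris operations by the maps $\sigma\in\mathcal{F}_{j}$. Concretely, I would construct by recursion on $n=1,2,\ldots,k$ idempotent elements $p_{j}^{(n)}\in\gamma S_{j}$ for $1\le j\le n$ — with the convention that $p_{0}^{(n)}$ is the unique ultrafilter on $S_{0}=\{e\}$ — satisfying: (1') $\sigma(p_{j}^{(n)})=p_{j-1}^{(n)}$ for every $j\le n$ and $\sigma\in\mathcal{F}_{j}$; (2') $p_{j}^{(n+1)}=p_{j}^{(n)}$ for $j\le n$; and (3') $p_{j}^{(n)}p_{j-1}^{(n)}=p_{j}^{(n)}$ for $2\le j\le n$. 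Once these are in hand, I would set $\mathcal{U}_{n}:=p_{1}^{(n)}p_{2}^{(n)}\cdots p_{n}^{(n)}$ (the product taken in $\gamma S$, which makes sense since each $\gamma S_{j}$ is a subsemigroup of $\gamma S$), and argue exactly as in Lemma~\ref{Lemma:idempotent} that $\mathcal{U}_{n}$ is idempotent, that $\mathcal{U}_{n}\mathcal{U}_{n-1}=\mathcal{U}_{n-1}\mathcal{U}_{n}=\mathcal{U}_{n}$, and that $\sigma(\mathcal{U}_{n})=\mathcal{U}_{n-1}$ for every $\sigma\in\mathcal{F}_{n}$. The last point uses that $\sigma:\gamma S_{n}\to\gamma S_{n-1}$ is a continuous homomorphism together with condition (3) of Definition~\ref{Definition:tetris_action}: the restriction of $\sigma$ to $S_{\le n-1}$ is either the identity or lies in $\mathcal{F}_{n-1}$, so applying $\sigma$ to the product $p_{1}^{(n)}\cdots p_{n}^{(n)}$ telescopes down to $p_{1}^{(n-1)}\cdots p_{n-1}^{(n-1)}=\mathcal{U}_{n-1}$ after using idempotency to absorb the terms that collapse.

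For the recursive step, suppose the sequences $(p_{j}^{(\ell)})_{j\le\ell}$ have been built for $\ell<n$. I would introduce the compact right topological subsemigroup $\Sigma_{n}$ of $\prod_{1\le j\le n}\gamma S_{j}$ consisting of those tuples $(q_{1},\ldots,q_{n})$ such that $\sigma(q_{j})=q_{j-1}$ for all $2\le j\le n$ and $\sigma\in\mathcal{F}_{j}$, $q_{j}=p_{j}^{(n-1)}$ for $j\le n-1$, and $q_{j}q_{i}=q_{j}$ whenever $i\le\min\{n-1,j\}$. As in Lemma~\ref{Lemma:idempotent}, the witness that $\Sigma_{n}\ne\varnothing$ is the tuple $q_{j}:=p_{j}^{(n-1)}p_{j-1}^{(n-1)}\cdots p_{1}^{(n-1)}$, which by (3') collapses to $p_{j}^{(n-1)}$ for $j\le n-1$ and to $p_{j}^{(n-1)}\cdots p_{n-1}^{(n-1)}$ for $j\ge n$; the compatibility with the $\sigma$'s follows from (1') and idempotency together with Definition~\ref{Definition:tetris_action}(3), and the absorption property $q_{j}q_{i}=q_{j}$ for $i\le\min\{n-1,j\}$ follows from (2') (i.e.\ $p_{i}^{(n-1)}=p_{i}^{(n)}$) and (3'). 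An application of Ellis--Namakura to $\Sigma_{n}$ then yields an idempotent element $(p_{1}^{(n)},\ldots,p_{n}^{(n)})$, and setting $p_{j}^{(n)}$ to its $j$-th coordinate gives (1'),(2'),(3'). The base case $n=1$ is a single idempotent $p_{1}^{(1)}\in\gamma S_{1}$, which exists by Ellis--Namakura since $\gamma S_{1}$ is a nonempty compact right topological semigroup.

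The one genuinely new point compared with Lemma~\ref{Lemma:idempotent} is that in the concrete case one had the luxury of the sets $M_{j}$ to see by hand that the analogue of $\Sigma_{1}$ (and hence each $\Sigma_{n}$) is nonempty — there the map $f_{j}$ restricted to $M_{j}$ was a bijection onto $M_{j-1}$, which let one lift ultrafilters along it. In the abstract setting the corresponding device is the datum $M_{n}$ and condition~(4) of Definition~\ref{Definition:tetris_action}, which says that all $\sigma\in\mathcal{F}_{n}$ agree on $M_{n}$: this common restriction $\sigma|_{M_{n}}:M_{n}\to M_{n-1}$ plays the role of $f_{j}|_{M_{j}}$, and its continuous extension maps $\gamma M_{n}$ onto (a subset of) $\gamma M_{n-1}$. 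So the main obstacle is to verify nonemptiness of $\Sigma_{n}$ when $n$ is the bottom-of-the-recursion value, and for this I would argue exactly as in the $k=1$ case of Lemma~\ref{Lemma:idempotent}: fix any $p\in\gamma M_{1}$ and build a tuple in $\prod_{j}\gamma M_{j}$ compatible with all the $\sigma$'s by recursion on $j$, using at each step that the continuous extension of the common map $\sigma|_{M_{j}}:\gamma M_{j}\to\gamma M_{j-1}$ is surjective onto $\gamma M_{j-1}$ restricted to the relevant clopen set, so that one can always find a preimage. Everything else — the semigroup-theoretic bookkeeping, the telescoping computation for $\sigma(\mathcal{U}_{n})$, and the verification of $\mathcal{U}_{n}\mathcal{U}_{n-1}=\mathcal{U}_{n-1}\mathcal{U}_{n}=\mathcal{U}_{n}$ — is formally identical to the corresponding part of Lemma~\ref{Lemma:idempotent}, so I would simply remark that ``the same proof as Lemma~\ref{Lemma:idempotent}'' applies once the role of $M_{j}$ and $f_{j}|_{M_{j}}$ is played by $M_{j}$ and the common restriction $\sigma|_{M_{j}}$ guaranteed by Definition~\ref{Definition:tetris_action}.
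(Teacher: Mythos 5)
Your proposal is essentially the paper's own argument: the paper gives no separate proof of this lemma, stating only that ``the same proof as Lemma \ref{Lemma:idempotent}'' applies, and your write-up is a correct fleshing-out of exactly that adaptation (the product semigroup $\Sigma_n$, the witness tuple $q_j = p_j^{(n-1)}\cdots p_1^{(n-1)}$, Ellis--Namakura, and the telescoping computation for $\sigma(\mathcal{U}_n)$ via Definition \ref{Definition:tetris_action}(3)).

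One point in your base case deserves correction, though it does not break the argument. You claim that the common restriction $\sigma|_{M_j}$ extends to a map $\gamma M_j\to\gamma M_{j-1}$ that is surjective onto the relevant set, so that one can lift ultrafilters bottom-up as in the concrete case. Definition \ref{Definition:tetris_action}(1) only guarantees that $\sigma(M_n)$ is an adequate partial subsemigroup of $M_{n-1}$, not that it is all of $M_{n-1}$, so surjectivity at the ultrafilter level is not available in general; in the concrete Lemma \ref{Lemma:idempotent} it came from the explicit fact that $f_j$ maps $M_j$ bijectively onto $M_{j-1}$. The fix is that in the abstract setting the lifting is unnecessary: a layered partial semigroup has only finitely many layers, so one can pick an arbitrary $q_k\in\gamma M_k$ and define $q_{j-1}:=\sigma(q_j)$ going downward, using condition (4) to see that this is independent of the choice of $\sigma\in\mathcal{F}_j$ and the listed properties of the continuous extensions to see that $q_{j-1}\in\gamma M_{j-1}\subset\gamma S_{j-1}$. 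With that replacement your proof is complete and coincides with the intended one.
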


Given a tetris action, one can define as in \cite[Definition 3.9]%
{farah_partition_2002} the collection $\mathcal{G}_{n}$ of maps from $S_{k}$
to $S_{n}$ of the form $\sigma _{n+1}\circ \sigma _{n+2}\circ \cdots \circ
\sigma _{k}$, where $\sigma _{j}\in \mathcal{F}_{j}$ for $j=n+1,\ldots ,k$.
We also let $\mathcal{G}$ be the union of $\mathcal{G}_{n}$ for $%
n=1,2,\ldots ,k$.

\begin{definition}
\label{Definition:block_sequence}A\emph{\ block sequence} in $S_{k}$ is a
sequence $\left( b_{n}\right) $ such that $f_{0}\left( b_{0}\right) \cdots
f_{n}\left( b_{n}\right) $ is defined for any $n\in \omega $ and $%
f_{0},\ldots ,f_{n}\in \mathcal{G}$.
\end{definition}

The notion of block sequence in $S_{n}$ for some $n\leq k$ is defined
similarly. We let $S_{n}^{\left[ m\right] }$ be the set of block sequences
in $S_{n}$ of length $m$, and $S_{\leq n}^{\left[ m\right] }$ be the union
of $S_{j}^{\left[ m\right] }$ for $j=1,2,\ldots ,n$. If $\left( b_{n}\right) 
$ is a block sequence in $S_{k}$, then we define the \emph{tetris subspace} $%
\mathrm{TS}_{j}\left( b_{n}\right) \subset S_{j}^{\left[ n\right] }$ of the $%
j$-th layer generated by $\left( b_{n}\right) $ to be the set of elements of 
$S_{j}^{\left[ n\right] }$ of the form $\left( a_{0},\ldots ,a_{m-1}\right) $
where for some $0=n_{0}<n_{1}<\cdots <n_{m}\in \omega $, $j_{i}\in k+1$, and 
$f_{i}\in \mathcal{G}_{j_{i}}$ for $i\in n_{m}$ one has that for every $d\in
m$, $\max \{j_{n_{d}},\ldots ,j_{n_{d+1}-1}\}=j$ and $%
a_{d}=f_{n_{d}}(b_{n_{d}})\cdots f_{n_{d+1}-1}(b_{n_{d+1}-1})$. Then using
Lemma \ref{Lemma:idempotent_abstract} one can prove as in Theorem \ref%
{Theorem:multidimGowers} the following result, which is a common
generalization of the Galvin-Glazer theorem and Theorem \ref%
{Theorem:multidimGowers}.

\begin{theorem}
\label{Theorem:abstract}Suppose that $S$ is a layered partial semigroup
endowed with a tetris action as above. Fix $m\in \mathbb{N}$ and a finite
coloring of $S_{\leq k}^{\left[ m\right] }$. Then there exists an infinite
block sequence $\left( b_{n}\right) $ in $S_{k}$ such $\mathrm{TS}_{j}\left(
b_{n}\right) ^{\left[ m\right] }$ is monochromatic for every $j=1,2,\ldots
,k $.
\end{theorem}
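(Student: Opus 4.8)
The plan is to mimic the proof of Theorem~\ref{Theorem:multiGowers}, replacing the concrete ultrafilters $\mathcal{U}_{j}$ on $\mathrm{FIN}_{j}$ by the idempotents $\mathcal{U}_{n}\in\gamma S_{n}$ produced by Lemma~\ref{Lemma:idempotent_abstract}, and then passing to the Fubini power to handle the parameter $m$, exactly as in the passage from Theorem~\ref{Theorem:multiGowers} to Theorem~\ref{Theorem:multidimGowers}. First I would fix the top idempotent $\mathcal{U}_{k}\in\gamma S_{k}$ and its Fubini power $\mathcal{V}_{k}:=\mathcal{U}_{k}^{\otimes m}$, a cofinite ultrafilter on $S_{k}^{[m]}$. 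The key structural input is that, by Lemma~\ref{Lemma:idempotent_abstract} together with condition (3) of Definition~\ref{Definition:tetris_action} and the fact that a composition $f=\sigma_{n+1}\circ\cdots\circ\sigma_{k}\in\mathcal{G}_{n}$ is a continuous semigroup homomorphism $\gamma S_{k}\to\gamma S_{n}$ by the bullet points preceding Lemma~\ref{Lemma:idempotent_abstract}, one has $f(\mathcal{U}_{k})=\mathcal{U}_{n}$ for every $f\in\mathcal{G}_{n}$, and moreover $\mathcal{U}_{n}\mathcal{U}_{k}=\mathcal{U}_{k}\mathcal{U}_{n}=\mathcal{U}_{k}$ for all $n\le k$ (the latter follows by induction from $\mathcal{U}_{n}\mathcal{U}_{n-1}=\mathcal{U}_{n-1}\mathcal{U}_{n}=\mathcal{U}_{n}$ and idempotency, just as in Lemma~\ref{Lemma:idempotent}).

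Next, given a finite coloring $c$ of $S_{\le k}^{[m]}$, I would choose for each $j=1,\ldots,k$ a $c$-monochromatic set $A_{j}\in\mathcal{V}_{j}$, where $\mathcal{V}_{j}:=\mathcal{U}_{j}^{\otimes m}$; here I use that $\mathcal{V}_{j}$ is an ultrafilter on $S_{j}^{[m]}$ and that $S_{\le k}^{[m]}$ is the disjoint union of the $S_{j}^{[m]}$. Then I would construct by recursion on $n\in\omega$ a block sequence $(b_{n})$ in $S_{k}$ together with, at stage $n$, the three-level hypothesis that for all choices of $j_{0},\ldots,j_{n+2}\in k+1$ and $f_{i}\in\mathcal{G}_{j_{i}}$ for $i<n+3$: (1) the $m$-tuple $(a_{0},\ldots,a_{m-1})$ built from $f_{0}(b_{0}),\ldots,f_{n}(b_{n})$ (grouped by any $0=n_{0}<\cdots<n_{m}$) lies in $A_{\max j_{n_{d}},\ldots,j_{n_{d+1}-1}}$ componentwise-consistently — more precisely, every such grouped tuple belongs to the appropriate $A_{j}$; (2) and (3) are the one-step and two-step extensions of (1) prefixed by $(\mathcal{U}_{k}y)$ and $(\mathcal{U}_{k}y)(\mathcal{U}_{k}z)$ respectively. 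As in the proof of Theorem~\ref{Theorem:multiGowers}, (2) and (3) at stage $n$ let one pick $b_{n+1}$ satisfying (1) and (2) at stage $n+1$, and then (3) at stage $n+1$ is recovered from (2) using the properties of ultrafilter quantifiers together with $\mathcal{U}_{j}\mathcal{U}_{k}=\mathcal{U}_{k}\mathcal{U}_{j}=\mathcal{U}_{k}$ and $f(\mathcal{U}_{k})=\mathcal{U}_{j}$ for $f\in\mathcal{G}_{j}$. At the end, (1) says exactly that $\mathrm{TS}_{j}(b_{n})^{[m]}\subset A_{j}$, so $c$ is constant on each $\mathrm{TS}_{j}(b_{n})^{[m]}$.

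The main obstacle is bookkeeping: one must verify that the Fubini power machinery interacts correctly with the homomorphisms in $\mathcal{G}$ and with the partial (not total) nature of the semigroup operation. Concretely, when applying a quantifier $(\mathcal{U}_{k}y)$ one needs $f_{n+1}(y)$ to be multipliable on the right of the already-chosen prefix; this is guaranteed because $(b_{n})$ is a block sequence in the sense of Definition~\ref{Definition:block_sequence} and because $\gamma S_{n}$ consists of ultrafilters concentrating on the domain of definition of the relevant products — so the set of admissible $y$ is in $\mathcal{U}_{k}$, and the ultrafilter-quantifier manipulations are legitimate. A further point of care is that the composite maps in $\mathcal{G}_{j}$ need not themselves lie in a single $\mathcal{F}$, but condition (3) of Definition~\ref{Definition:tetris_action} ensures that restricting such a composite to a lower layer again yields either a composite in $\mathcal{G}$ or an identity, which is precisely what makes the identity $f(\mathcal{U}_{k})=\mathcal{U}_{j}$ go through by the same telescoping computation as at the end of the proof of Lemma~\ref{Lemma:idempotent}. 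Since all of this is routine given Lemma~\ref{Lemma:idempotent_abstract} and the cited results of \cite{farah_partition_2002}, and since the argument is notationally heavier but structurally identical to the proof of Theorem~\ref{Theorem:multidimGowers}, I would record only the setup and the inductive step in detail and leave the remaining verifications to the reader.
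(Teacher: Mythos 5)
Your proposal follows exactly the route the paper intends: the paper itself only remarks that Theorem \ref{Theorem:abstract} is proved ``as in Theorem \ref{Theorem:multidimGowers}'' using Lemma \ref{Lemma:idempotent_abstract}, i.e.\ by running the recursive construction from the proof of Theorem \ref{Theorem:multiGowers} with the abstract idempotents $\mathcal{U}_{n}\in\gamma S_{n}$ and their Fubini powers. Your additional observations --- that $f(\mathcal{U}_{k})=\mathcal{U}_{n}$ for $f\in\mathcal{G}_{n}$ by composing the identities $\sigma(\mathcal{U}_{j})=\mathcal{U}_{j-1}$, and that $\mathcal{U}_{n}\mathcal{U}_{k}=\mathcal{U}_{k}\mathcal{U}_{n}=\mathcal{U}_{k}$ follows by telescoping --- are exactly the verifications the paper leaves implicit, so the proposal is correct and essentially identical in approach.
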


It is clear that Theorem \ref{Theorem:abstract} has the Galvin-Glazer
theorem \cite[Theorem 2.20]{todorcevic_introduction_2010} as a particular
case. Set now $S_{j}:=\mathrm{FIN}_{j}$ for $j=0,1,\ldots ,k$, and $%
S:=S_{0}\cup \cdots \cup S_{n}$. Define a partial semigroup operation on $S$
by $\left( b,b^{\prime }\right) \mapsto b+b^{\prime }$ whenever $\mathrm{Supp%
}\left( b\right) <\mathrm{\mathrm{Sup}p}\left( b^{\prime }\right) $, where $%
b+b^{\prime }$ is the pointwise sum. Then $S=S_{0}\cup \cdots \cup S_{k}%
\mathrm{\ }$is a layered partial semigroup in the sense of Definition \ref%
{Definition:layered}. Denote by $\mathcal{F}_{n}$ for $n=1,2,\ldots ,k$ the
collection of multiple tetris operations $T_{1},\ldots ,T_{n}:\mathrm{FIN}%
_{n}\rightarrow \mathrm{FIN}_{n-1}$ defined in the introduction. Let also $%
M_{n}\subset S_{n}$ be the set of $b\in S_{n}$ such that $b\left( i\right)
\in \left\{ 0,n\right\} $ for every $i\in \omega $. It is then easy to see
that $\left( \mathcal{F}_{n},M_{n}\right) _{n=1}^{k}$ is a tetris action on $%
S$ in the sense of Definition \ref{Definition:tetris_action}. Furthermore
the conclusions of Theorem \ref{Theorem:abstract} in the particular case of
such a tetris action yields Theorem \ref{Theorem:multidimGowers}.

\bibliographystyle{amsplain}
\bibliography{bibliography}

\end{document}